\numberwithin{equation}{section}
\newcommand{\R}{\mathbb{R}} 
\newcommand{\N}{\mathbb{N}} 
\renewcommand{\phi}{\varphi}
\providecommand{\dx}{\:\mathrm{d}x}
\newcommand{\cC}{{\mathcal C}}
\newcommand{\cK}{{\mathcal K}}
\newcommand{\cR}{{\mathcal R}}
\newcommand{\bR}{\textbf{R}}
\newcommand{\weakto}{\rightharpoonup}
\newcommand{\1}{\mathds{1}}
\newcommand{\RN}{\R^{N}}
\newcommand{\ri}{\text{i}}
\newcommand{\SR}{\mathcal{S}}                      
\DeclareMathOperator*{\diam}{diam}
\DeclareMathOperator*{\dist}{dist}
\DeclareMathOperator{\supp}{supp}
\DeclareMathOperator{\esssup}{esssup}
\newtheorem{theorem}{Theorem}[section] 
\newtheorem{defi}[theorem]{Definition}
\newtheorem{proposition}[theorem]{Proposition}
\newtheorem{corr}[theorem]{Corollary}
\author{}
\date{}
\begin{document}
\title{Dual variational methods for a nonlinear Helmholtz equation with sign-changing nonlinearity}

\author{Rainer Mandel, Dominic Scheider, Tolga Ye\c{s}il}
\maketitle
\allowdisplaybreaks

\begin{abstract}
  We prove new existence results for a Nonlinear Helmholtz equation with sign-changing nonlinearity of the
  form 
  $$ - \Delta u - k^{2}u = Q(x)|u|^{p-2}u,  \quad u \in W^{2,p}(\RN) $$ with 
   $k>0,$  $N \geq 3$, $p \in \left[\left.\frac{2(N+1)}{N-1},\frac{2N}{N-2}\right)\right.$ and $Q \in
L^{\infty}(\RN)$. 
  Due to the sign-changes of $Q$, our solutions have infinite Morse-Index in the
  corresponding dual variational formulation.
  \end{abstract}

\section{Introduction}
In the present article, we consider   nonlinear Helmholtz equations of the form
 \begin{equation}\label{eqn:introQ}
 - \Delta u - k^{2}u = Q(x)|u|^{p-2}u \qquad \text{on } \RN
 \end{equation}
for $p \in \left[\left.\frac{2(N+1)}{N-1},\frac{2N}{N-2}\right)\right.$ and $k >0 $ with a weight function $Q \in
L^{\infty}(\RN)$ that may change sign. To allow for the latter is nontrivial given that one of the
main tools for proving the existence of solutions  is the dual variational method that, in its classical
form, relies on the nonnegativity of the weight function. In the context of Nonlinear Helmholtz equations it 
was first implemented in a paper by Ev\'{e}quoz and Weth~\cite{Evequoz2015}. 
To highlight the role of the
nonnegativity of $Q$ we briefly recapitulate the approach. 

\medskip

Instead of~\eqref{eqn:introQ} one considers a reformulation as the integral equation 
\begin{equation}\label{eqn:introintegraleq}
u = \bR(Q|u|^{p-2}u) \qquad u \in L^{p}(\RN),
\end{equation}
where $\bR$ is the real part of a resolvent type operator $\cR $, i.e.,  a right inverse  of the
Helmholtz operator $-\Delta-k^2$ on $\R^N$. For $f \in \SR(\RN)$ the operator $\cR $ is given by $\cR(f) = 
\Phi \ast f$ where 

$$ 
  \Phi(x):= \frac{\ri}{4}\left( \frac{k}{2\pi |x|}\right)^{\frac{N-2}{2}}H^{(1)}_{\frac{N-2}{2}}(k|x|), \quad
  x \in \RN\setminus\{0\} 
$$
is the fundamental solution of the Helmholtz equation associated with \textit{Sommerfeld's outgoing radiation
condition}\begin{equation}\label{eqn:introsommerfeld}
\left|\nabla \Phi(x) -  k\,\ri \Phi(x)\frac{x}{|x|}\right| = o(|x|^{\frac{1-N}{2}}), \quad \text{ as }|x|
\to \infty.
\end{equation}

Here, $H^{(1)}_{\frac{N-2}{2}}$ denotes the Hankel function of the first kind and order $\frac{N-2}{2}$. 
So the  operator $\bR$ from~\eqref{eqn:introintegraleq} is given by $\bR(f) = \Psi \ast f$ where
$\Psi := \text{Re}(\Phi)$ is given by 
\begin{equation}\label{eq:defnPsi}
  \Psi(x)= -\frac14 \left( \frac{k}{2\pi |x|}\right)^{\frac{N-2}{2}}Y_{\frac{2-N}{2}}(k|x|), \quad x \in
  \RN\setminus\{0\}
\end{equation} 
It is
known \cite[Theorem 2.3]{kenig1987} that  $\mathcal R$ extends as a continuous linear map from
$L^{p'}(\RN) \to L^{p}(\RN)$ precisely for $p \in \left[ \frac{2(N+1)}{N-1},\frac{2N}{N-2} \right]$.  
One then introduces the  dual variable $\tilde u:= Q^{1/p'}|u|^{p-2}u$ and observes that solutions of
\eqref{eqn:introintegraleq} are precisely the critical points of the (dual) energy functional $I: L^{p'}(\RN)
\to L^{p}(\RN)$ given by 
$$ 
  I(\tilde u):= \frac{1}{p'}\left\|\tilde u\right\|^{p'}_{p'} - 
  \frac12 \int\limits_{\RN} \tilde u\cK \tilde u~dx. 
$$
Here, $\cK : L^{p'}(\RN) \to L^{p}(\RN), \tilde u \mapsto  
Q^{\frac{1}{p}}\textbf{R}(Q^{\frac{1}{p}}\tilde u)$ is a symmetric operator in the sense of
\begin{equation}\label{eqn:symmetry}
\int\limits_{\RN} f \,\cK g ~dx = \int\limits_{\RN} g\, \cK f ~dx 
\qquad\text{for all } f,g \in L^{p'}(\RN), 
\end{equation}
 see~\cite[Lemma 4.1]{Evequoz2015}. Under the additional assumption that $Q$ vanishes at
 infinity, one obtains that $I$ is an odd  functional of class $\cC^1$ that has the Mountain Pass Geometry and
 satisfies the Palais-Smale Condition. So the existence of an unbounded sequence of solutions
 to~\eqref{eqn:introintegraleq} follows from the Symmetric Mountain Pass Theorem. 
 Inverting the transformation $u\mapsto \tilde u$ one thus obtains an unbounded sequence of solutions to the 
 nonlinear Helmholtz equation~\eqref{eqn:introQ}. This is the strategy proposed by Ev\'{e}quoz and
 Weth~\cite{Evequoz2015} for the focusing nonlinear Helmholtz equation $Q\geq 0$. We refer
 to~\cite{MaMoPe_Osc} for analogous results in the defocusing case $Q \leq 0$, where the dual variational
 approach was implemented for the dual variable $\tilde u:= |Q|^{1/p'}|u|^{p-2}u$. In view of these two
 results it is natural to ask for a dual variational approach work in the intermediate case of sign-changing
 $Q$. In this paper, we provide a solution for this problem.

\medskip

To treat sign-changing coefficients $Q \in L^{\infty}(\RN)$ we have to come up with a new idea to make the
dual variation approach work. We write $Q = Q_{+} - Q_{-}$ where $Q_{\pm}:= |Q| \1_{A_{\pm}}$ and
\begin{equation}\label{eqn:Apm}
  A_{+}:= \{Q > 0\} , \qquad A_{-}:=\{Q\leq 0\}.
\end{equation}
In fact we will consider $Q_{\lambda}:= \lambda Q_{+} -Q_{-}$ for $\lambda > 0$ in the following. Our main
idea is to introduce a tuple of dual variables $(\phi,\psi)\in L^{p'}(A_+)\times L^{p'}(A_-)$ associated with
$(u|_{A_+},u|_{A_-})$ and to derive a coupled system of nonlinear integral equations the solutions of which
are precisely the critical points of an associated strongly indefinite dual energy
functional. We will see that the indefiniteness comes from the presence of $Q_-$ and thus vanishes in the case 
of a nonnegative function $Q\geq 0$. In particular, the critical points of this dual energy functional 
will have infinite Morse index, which clearly distinguishes these solutions from the dual bound and ground
states obtained in~\cite{Evequoz2015} in the case $Q\geq 0$. We will explain the dual variational framework in detail in
Section~\ref{sec:DualVarFor}.
Our conditions for the existence of critical points involve  the linear operator
$\cK:L^{p'}(\RN)\to L^{p}(\RN),  f \mapsto |Q|^{\frac{1}{p}}\bR(|Q|^{\frac{1}{p}}f)$ as well as the numbers
\begin{equation}\label{eqn:introalphabeta}
  \alpha:= \max\limits_{\left\|\phi\right\|_{p'}  = 1, \atop \supp(\phi)\subset A_+}\int\limits_{\RN} \phi
  \cK \phi ~dx, \qquad \beta:= \max\limits_{\left\|\phi\right\|_{p'} 
  = \left\|\psi\right\|_{p'} =1, \atop \supp(\phi)\subset A_+,\supp(\psi)\subset A_-} \int\limits_{\RN}
  \phi \cK \psi ~dx.
\end{equation}
Since we will assume $\cK$ to be compact, both values are indeed attained. 
Moreover we have $\beta \geq 0$
and \cite[Lemma 4.2(ii)]{Evequoz2015} 
gives $\alpha >0$ once we assume that $A^+$ has positive measure, i.e., $Q^+\not\equiv 0$. Our
main result reads as follows.



\begin{theorem}\label{thm:introthm}
 Let $p \in  [\frac{2(N+1)}{N-1}, \frac{2N}{N-2})$ and $ Q\in
  L^\infty(\RN)$, $Q^+\not\equiv 0$. Moreover assume that  
  \begin{equation}\label{eq:abstractassumptions}
    \cK : L^{p'}(\RN) \to L^{p}(\RN)\text{ is compact  and } 
    \int\limits_{\RN}\psi \cK \psi~dx \geq  0 \text{ for all }\psi \in L^{p'}(A_{-}).
  \end{equation}
  Then for almost all $\lambda>\lambda_{0}:=(2\beta\alpha^{-1})^{p}$ there is a nontrivial  strong solution $u \in W^{2,q}(\RN)\cap
  \cC^{1,\gamma}(\RN)$ for all $q\in \left(\frac{2N}{N-1},\infty\right)$ and $\gamma \in (0,1)$ of
  \begin{equation}\label{eqn:introeq}
 - \Delta u - k^2 u = Q_{\lambda}(x)|u|^{p-2}u \qquad \text{on } \RN.
 \end{equation}
\end{theorem}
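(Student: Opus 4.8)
The plan is to recast~\eqref{eqn:introeq}, via the reformulation $u=\bR(Q_\lambda|u|^{p-2}u)$, in a dual variational framework adapted to the splitting $Q_\lambda=\lambda Q_+-Q_-$. I introduce two dual variables $\phi\in L^{p'}(A_+)$ and $\psi\in L^{p'}(A_-)$, corresponding to $\phi=(\lambda|Q|)^{1/p'}|u|^{p-2}u$ on $A_+$ and $\psi=-|Q|^{1/p'}|u|^{p-2}u$ on $A_-$, so that $Q_\lambda|u|^{p-2}u=|Q|^{1/p}(\lambda^{1/p}\phi+\psi)$ and hence $|Q|^{1/p}u=\cK(\lambda^{1/p}\phi+\psi)$. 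A direct computation shows that the critical points of the dual energy
\[
J_\lambda(\phi,\psi):=\frac{1}{p'}\|\phi\|_{p'}^{p'}-\frac{1}{p'}\|\psi\|_{p'}^{p'}-\frac12\int_{\RN}\bigl(\lambda^{1/p}\phi+\psi\bigr)\cK\bigl(\lambda^{1/p}\phi+\psi\bigr)\,dx
\]
are exactly the solutions of the coupled system $|\phi|^{p'-2}\phi=\lambda^{1/p}\cK(\lambda^{1/p}\phi+\psi)$ on $A_+$ and $|\psi|^{p'-2}\psi=-\cK(\lambda^{1/p}\phi+\psi)$ on $A_-$, and that inverting $u\mapsto(\phi,\psi)$ turns each such pair into a solution of the integral equation; the upgrade to a strong solution of~\eqref{eqn:introeq} is deferred to the final step. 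The functional $J_\lambda$ is of class $\cC^1$ and strongly indefinite: the sign condition $\int_{\RN}\psi\cK\psi\,dx\ge0$ on $L^{p'}(A_-)$ renders $\psi\mapsto J_\lambda(\phi,\psi)$ strictly concave and tending to $-\infty$ as $\|\psi\|_{p'}\to\infty$, so it has a unique maximiser $\Psi(\phi)$, whereas in the $\phi$-directions $J_\lambda$ has mountain-pass character. I would therefore pass to the reduced functional $\hat J_\lambda(\phi):=\max_{\psi}J_\lambda(\phi,\psi)=J_\lambda(\phi,\Psi(\phi))$ on $L^{p'}(A_+)$, which is again $\cC^1$ and whose critical points coincide with those of $J_\lambda$.

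Since $p'<2$, near the origin the term $\tfrac1{p'}\|\phi\|_{p'}^{p'}$ dominates the quadratic form, so $J_\lambda\ge\rho>0$ on a small sphere $\|\phi\|_{p'}=r$ in $L^{p'}(A_+)$ uniformly in $\psi$; in particular $\hat J_\lambda\ge\rho$ there. To produce a point where $\hat J_\lambda\le0$ I fix a maximiser $e$ of $\alpha$ (so $\|e\|_{p'}=1$, $\int_{\RN}e\cK e\,dx=\alpha$) and estimate, using $\int_{\RN}\psi\cK\psi\,dx\ge0$ and $|\int_{\RN}e\cK\psi\,dx|\le\beta\|\psi\|_{p'}$,
\[
\hat J_\lambda(se)\le\max_{\psi}\Bigl[\tfrac1{p'}s^{p'}-\tfrac{\lambda^{2/p}\alpha}{2}s^2+\lambda^{1/p}\beta\,s\|\psi\|_{p'}-\tfrac1{p'}\|\psi\|_{p'}^{p'}\Bigr]=\tfrac1{p'}s^{p'}-\tfrac{\lambda^{2/p}\alpha}{2}s^2+\tfrac{\lambda\beta^p}{p}s^{p}.
\]
For some radius $s$ the right-hand side is negative precisely when the quadratic gain $\tfrac{\lambda^{2/p}\alpha}{2}s^2$ can dominate the competing terms of orders $s^{p'}$ and $s^{p}$; a short optimisation shows this window of admissible $s$ is non-empty once $\lambda$ exceeds the threshold $\lambda_0=(2\beta\alpha^{-1})^p$. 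This is where the hypothesis $\lambda>\lambda_0$ enters, and it is what secures the mountain-pass (equivalently, linking) geometry against the infinite-dimensional cone $L^{p'}(A_-)$ of concave directions. For $\lambda>\lambda_0$ the Mountain Pass Theorem applied to $\hat J_\lambda$ then yields, after setting $\psi_n=\Psi(\phi_n)$, a Palais--Smale sequence $(\phi_n,\psi_n)$ for $J_\lambda$ at a min-max level $c(\lambda)\ge\rho>0$.

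The crux is compactness. Although $\cK$ is assumed compact, the functional is indefinite, and the standard manipulation gives only
\[
\Bigl(\tfrac1{p'}-\tfrac12\Bigr)\bigl(\|\phi_n\|_{p'}^{p'}-\|\psi_n\|_{p'}^{p'}\bigr)=J_\lambda(\phi_n,\psi_n)-\tfrac12\langle J_\lambda'(\phi_n,\psi_n),(\phi_n,\psi_n)\rangle=c+o(1)+o\bigl(\|(\phi_n,\psi_n)\|\bigr),
\]
which controls the difference of the norms but neither of them individually; Palais--Smale sequences need not be bounded, and this is exactly the reason the conclusion holds only for almost every $\lambda$. I would circumvent this with the Struwe--Jeanjean monotonicity trick: after the rescaling $\phi\mapsto\lambda^{-1/p}\phi$ the functional takes the form $\tfrac1{p'}\lambda^{-1/(p-1)}\|\phi\|_{p'}^{p'}$ minus $\lambda$-independent terms, so the min-max level $c(\lambda)$ is monotone in $\lambda$ and therefore differentiable for almost every $\lambda>\lambda_0$. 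At each such $\lambda$ the trick provides a \emph{bounded} Palais--Smale sequence; compactness of $\cK$ then upgrades it to a strongly convergent one, whose limit $(\phi_\lambda,\psi_\lambda)$ is a critical point of $J_\lambda$ with $J_\lambda(\phi_\lambda,\psi_\lambda)=c(\lambda)\ge\rho>0$, so in particular $(\phi_\lambda,\psi_\lambda)\ne(0,0)$.

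Finally, inverting the transformation yields a nontrivial $u\in L^p(\RN)$ solving $u=\bR(Q_\lambda|u|^{p-2}u)$. Since $Q\in L^\infty(\RN)$ and $|u|^{p-2}u\in L^{p'}(\RN)$, the mapping properties of $\bR$ together with the local singularity and decay of the kernel $\Psi$ from~\eqref{eq:defnPsi} allow the regularity bootstrap of~\cite{Evequoz2015} to be carried out, promoting $u$ to a strong solution with $u\in W^{2,q}(\RN)\cap\cC^{1,\gamma}(\RN)$ for all $q\in(\tfrac{2N}{N-1},\infty)$ and $\gamma\in(0,1)$. The principal obstacle throughout is the strong indefiniteness caused by $Q_-$: it is responsible both for the threshold $\lambda_0$ in the geometry and, more seriously, for the possible unboundedness of Palais--Smale sequences, which forces the recourse to the monotonicity trick and hence the restriction to almost every $\lambda$.
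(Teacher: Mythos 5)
Your proposal is correct and follows essentially the same route as the paper: dual variables $(\phi,\psi)$ on $A_+\times A_-$, saddle-point reduction in $\psi$ using the sign hypothesis on $\cK$ over $L^{p'}(A_-)$, mountain-pass geometry for the reduced functional with the same threshold $\lambda_0=(2\beta\alpha^{-1})^{p}$, the Jeanjean--Toland monotonicity trick to get bounded Palais--Smale sequences for almost every $\lambda$, compactness of $\cK$ to pass to a critical point, and the known regularity bootstrap to conclude. The only difference is cosmetic: you place the parameter inside the quadratic form via $\lambda^{1/p}\phi+\psi$ rather than as the factor $\lambda^{1-p'}$ in front of $\|\phi\|_{p'}^{p'}$, and your own rescaling $\phi\mapsto\lambda^{-1/p}\phi$ shows the two formulations coincide (up to the sign convention for $\psi$).
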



The proof relies on a combination of a saddle-point reduction and the abstract monotonicity trick by
Jeanjean-Toland \cite{jeantoland98}, which provides bounded Palais-Smale sequences (only) for almost all $\lambda>\lambda_0$. It would clearly be desirable
to extend our result to all $\lambda>\lambda_0$, but related a priori bounds seem to be out of reach. Notice also that
\cite[Theorem~1.4]{MaMoPe_Osc} suggests the existence of nontrivial solutions also for small $\lambda>0$,
possibly assuming the set $\{Q>0\}$ to be small enough and following a different variational approach.
Let us point out that $\lambda_0$ is small provided that the number $\beta$, which is the same as the
operator norm $\|\mathds{1}_{A_{+}}\cK(\1_{A_{-}})\|_{p'\to p}$, is small compared to $\alpha>0$. 
In the case $p>\frac{2(N+1)}{N-1}$ this can be achieved by considering coefficient functions $Q$ such that
$\dist(A_-,A_+)$ is large enough, see \cite[Lemma 2.6]{evequoz2020dual}.

\medskip
 

In the following Corollary, we show that the abstract conditions~\eqref{eq:abstractassumptions} hold for a
reasonable class of sign-changing functions $Q\in L^\infty(\RN)$. If for instance $Q$ vanishes at infinity,
then \cite[Lemma 4.1(ii)]{Evequoz2015} applied to $|Q|$ implies that $\cK: L^{p'}(\RN) \to L^{p}(\RN)$ is
compact. It is less immediate to verify the non-negativity assumption on the bilinear form 
\eqref{eq:abstractassumptions}.
From~\cite[Corollary~5.4]{chen2019complex} we infer that this condition holds for measurable sets $A_-$ with
small enough diameter. To be more precise, if $y_{\frac{N-2}{2}}$ denotes the first (positive) zero of the
Bessel function $Y_{\frac{N-2}{2}}$, then the
condition $\diam(A_{-}) \leq k^{-1}y_{\frac{N-2}{2}}$ is sufficient. To put this condition into
perspective, note that for $N=3$ we have $Y_{\frac12}(t) = -\sqrt{\frac{2}{\pi t}}\cos{t}$, thus $y_{1/2} = \pi/2$ and $y_{\frac{N-2}{2}} >
y_{1/2}$ for $N >3$ (see \cite[Section 9.5]{abrastegun}).
We thus conclude as follows.

\begin{corr}\label{cor}
 Assume $p \in  [\frac{2(N+1)}{N-1}, \frac{2N}{N-2})$ and $Q\in
  L^\infty(\RN),Q^+\not\equiv 0$ . Moreover assume    
  
  \begin{equation}\label{eq:SufficientConditions}
	 \lim_{R\to\infty} \esssup_{|x|\geq R} |Q(x)| = 0\qquad\text{and}\qquad
    \diam{(A_{-})} \leq k^{-1} y_{\frac{N-2}{2}}.  
  \end{equation}

  Then for almost all $\lambda>\lambda_0:=(2\beta\alpha^{-1})^{p}$  there is a nontrivial  strong
  solution $u \in W^{2,q}(\RN)\cap \cC^{1,\gamma}(\RN)$ for all $q\in \left(\frac{2N}{N-1},\infty\right)$ and
  $\gamma \in (0,1)$ of \eqref{eqn:introeq}.
\end{corr}

The regularity results in Theorem \ref{thm:introthm} and Corollary \ref{cor} are direct
consequences of \cite[Lemma 4.3]{Evequoz2015} and of the iteration procedure from Step~3 in the proof of
\cite[Theorem~1]{Man_Uncountably}. Notice that \cite[Theorem~1]{Man_Uncountably} provides solutions
to far more general Nonlinear Helmholtz equations than~\eqref{eqn:introQ} regardless of whether sign-changes occur or not,
but the constructed solutions are small. 
This result relies on a fixed point approach. Let us also mention~\cite{EvWe_Real} where nontrivial solutions
of Nonlinear Helmholtz equations are constructed for rather general and possibly sign-changing nonlinearities
vanishing identically outside some compact subset of~$\RN$. Our method is entirely different from any of
these approaches.

\medskip

This paper is organzied as follows: In Section 2 we introduce our basic tools and develop the dual
variational framework by reformulating the indefinite Nonlinear Helmholtz equation as a coupled system of
integral equations. Then we prove that nontrivial critical points of the associated energy functional
$J_{\lambda}$ are indeed nontrivial solutions $u \in L^{p}(\RN)$ of the integral equation 
$u  = \bR(Q_{\lambda}|u|^{p-2}u)$. This motivates the search for critical points of $J_\lambda$.  
In Section~3 we perform the saddle-point reduction of $(\phi,\psi)\mapsto J_\lambda(\phi,\psi)$ leading to a
reduced function $\tilde J_\lambda$ that depends on $\phi$ only. In Section  4 we establish the existence of
bounded Palais-Smale sequences for these reduced functionals for almost all $\lambda > \lambda_{0}$. As
mentioned above, this step entirely relies on the monotonicity trick  by Jeanjean and
Toland~\cite{jeantoland98}. Finally, we combine all the auxiliary results to prove Theorem \ref{thm:introthm}
and Corollary \ref{cor} in Section 5. 

\medskip

Let us close this introduction by fixing some notation: Throughout the paper we denote by $B_{r}(x)$ the open
ball in $\RN$ with radius $r>0$ and center at $x \in \RN$.
Moreover, we set $B_{r} = B_{r}(0)$ for any $r>0$. For $1\leq s\leq \infty$, we abbreviate the standard norm
on $L^{s}(\RN)$ by $\left\|\cdot\right\|_{s}$. The Schwartz-class of rapidly decreasing functions on $\RN$ is
denoted by $\SR(\RN)$. For any $p \in (1,\infty)$ we always denote by $p':=\frac{p}{p-1}$ the H\"older
conjugate of $p$. The indicator function of a measurable set $B\subset\RN$ is $\1_B$. By $\diam{(B)}$ we
always denote the diametere of a set. We will always use the symbols
$\phi,\psi$ to denote $L^{p'}(A_{+})-$ and $L^{p'}(A_{-})$-functions that are continued trivially to the whole
of~$\RN$.
 
\medskip

\textbf{Acknowledgements}\\
The authors would like to thank Tobias Weth for helpful suggestions and stimulating discussions.
The first two authors are funded by the Deutsche Forschungsgemeinschaft (DFG, German Research Foundation)
- Project-ID 258734477 - SFB 1173.

\section{Dual variational formulation} \label{sec:DualVarFor}

In this section we will formulate a variational framework to the equation~\eqref{eqn:introeq}. 
We recall from the introduction that solutions of our problem are obtained as solutions of the integral
equation 
\begin{equation}\label{eqn:integral_equation}
u  = \bR(Q_{\lambda}|u|^{p-2}u),\qquad~~~~~~~ u \in L^{p}(\RN). 
\end{equation}
where $\bR(f) = \Psi \ast f$ for the function $\Psi$ introduced in~\eqref{eq:defnPsi}  and
\begin{equation}\label{eqn:resolvent}
\left\| \bR(f) \right\|_{L^{p}(\RN)} \leq C \left\|f\right\|_{L ^{p'}(\RN)}
\end{equation}
for all $p \in \left[\frac{2(N+1)}{N-1},\frac{2N}{N-2} \right]$ and some constant $C>0$. 

\medskip 

\noindent
To obtain the dual variational formualation of~\eqref{eqn:integral_equation} we introduce $v:=
\1_{A_{+}}u$ and $w:= \1_{A_{-}}u$. Then  \eqref{eqn:integral_equation} is equivalent to the system
\vspace{-6pt}
\begin{align*}
  \begin{aligned}
v &= \lambda \mathds{1}_{A_{+}}\bR\left[Q_+|v|^{p-2}v\right] -  \mathds{1}_{A_{+}}\bR\left[
Q_-|w|^{p-2}w \right], \\
w&=  \lambda \mathds{1}_{A_{-}}\bR\left[Q_+|v|^{p-2}v\right] -  \mathds{1}_{A_{-}}\bR\left[
  Q_-|w|^{p-2}w \right]. 
    \end{aligned}
\end{align*}
\vspace{-6pt}
Setting  
\begin{equation*}
  \phi := \lambda Q_+^{1/p'}|v|^{p-2}v \in L^{p'}(A_{+}),  \qquad 
  \psi:=   Q_-^{1/p'}|w|^{p-2}w \in L^{p'}(A_{-})
\end{equation*}
we deduce 
\vspace{-10pt}
\begin{align*}
  \lambda^{1-p'}|\phi|^{p'-2}\phi  
&= Q_+^{1/p} v  \\
&= \lambda Q_+^{1/p} \bR\left[Q_+|v|^{p-2}v\right] - Q_+^{1/p}
\bR\left[Q_-|w|^{p-2}w \right] \\
&=  Q_+^{1/p} \bR\left[Q_+^{1/p} \phi\right] -
 Q_+^{1/p} \bR\left[Q_-^{1/p} \psi\right] \\
 &=  Q_+^{1/p} \bR\left[|Q|^{1/p} (\phi-\psi)\right]. 
\end{align*}\vspace{-10pt}
\vspace{-10pt}
Similarly
\begin{align*}
 |\psi|^{p'-2}\psi 
&=  Q_-^{1/p } \bR\left[|Q|^{1/p}(\phi-\psi)\right].
 \end{align*}
In terms of the Birman-Schwinger operator $\cK : f\mapsto  |Q|^{\frac{1}{p}} \bR\big(|Q|^{\frac{1}{p}}f\big)$
introduced above this can be reformulated as \vspace{-10pt}
\begin{align*} 
  \lambda^{1-p'} |\phi|^{p'-2}\phi &= \mathds{1}_{A_{+}}\cK(\phi- \psi),  \\
   |\psi|^{p'-2}\psi &= \mathds{1}_{A_-}\cK(\phi- \psi) 
\end{align*}
and therefore carries a variational structure through the (dual) energy functional $J_\lambda$ on $L^{p'}(A_{+})\times L^{p'}(A_{-})$ given by
\begin{equation}\label{eqn:functional}
 J_{\lambda}(\phi,\psi):= \frac{\lambda^{1-p'}}{p'}\left\|\phi\right\|^{p'}_{p'} -
\frac{1}{p'}\left\|\psi\right\|^{p'}_{p'} - \frac{1}{2}\int\limits_{\RN}(\phi - \psi)\cK(\phi -
 \psi) ~dx.  
\end{equation}
This functional is of class $\cC^{1}$ with \vspace{-8pt}
\begin{align*}\label{eqn:derivative}
\partial_1J_{\lambda}(\phi,\psi)[h_{1}] &= \int\limits_{\RN} \left(\lambda^{1-p'}|\phi|^{p'-2}\phi -
\cK(\phi-\psi)\right)h_{1}~dx, &&h_{1} \in L^{p'}(A_{+}) \\
\partial_2 J_{\lambda}(\phi,\psi)[h_{2}] &=\int\limits_{\RN}\left(-|\psi|^{p'-2}\psi -
\cK(\phi-\psi)\right)h_{2}~dx, &&h_{2} \in L^{p'}(A_{-}).
\end{align*}
Here $\partial_1,\partial_2$ standard for partial derivatives with respect to $\phi$ and $\psi$.
For this reason we will look for critical points of $J_\lambda$. These solve the integral equation \eqref{eqn:integral_equation}. Thus by the regularity results \cite[Lemma 4.3]{Evequoz2015} and \cite[p.13]{Man_Uncountably} these are indeed strong solutions to our original problem \eqref{eqn:introeq}. 

\begin{proposition}\label{prop:crit_points_are solutions}
Let $(\phi,\psi) \in L^{p'}(A_{+})\times L^{p'}(A_{-})\setminus\{(0,0)\}$ be a critical point of $J_{\lambda}$
where $\lambda>0$. Then 
$$ 
  u  := \mathrm{{\bf R}} \left(|Q|^{\frac{1}{p}}(\phi-\psi)\right)\in L^{p}(\RN) 
$$
is a nontrivial solution of \eqref{eqn:integral_equation}. 
\end{proposition}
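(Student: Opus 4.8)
The plan is to start from the critical point equations for $J_\lambda$, which the excerpt has already derived in the form
$$
\lambda^{1-p'}|\phi|^{p'-2}\phi = \1_{A_+}\cK(\phi-\psi), \qquad |\psi|^{p'-2}\psi = \1_{A_-}\cK(\phi-\psi).
$$
These hold because $(\phi,\psi)$ annihilates both partial derivatives, and I would note that the test functions $h_1,h_2$ range over all of $L^{p'}(A_+)$ and $L^{p'}(A_-)$, so the integrands vanish pointwise a.e.\ on $A_+$ and $A_-$ respectively (using $\cK = |Q|^{1/p}\bR(|Q|^{1/p}\cdot)$ and the indicator restriction).

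Next I would reverse the substitution that defined $\phi,\psi$. Setting $u := \bR(|Q|^{1/p}(\phi-\psi))$, I want to recover $v = \1_{A_+}u$ and $w = \1_{A_-}u$ and verify they satisfy the coupled system equivalent to the integral equation \eqref{eqn:integral_equation}. Concretely, from the first critical point equation, $\1_{A_+}\cK(\phi-\psi) = \1_{A_+}|Q|^{1/p}u = Q_+^{1/p}u$, so $\lambda^{1-p'}|\phi|^{p'-2}\phi = Q_+^{1/p}u$ on $A_+$; solving this algebraic relation for $\phi$ in terms of $u$ should reproduce $\phi = \lambda Q_+^{1/p'}|v|^{p-2}v$ with $v = \1_{A_+}u$, using $(p'-1)(p-1)=1$ and the identity $|Q|^{1/p}=Q_\pm^{1/p}$ on $A_\pm$. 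The analogous computation on $A_-$ gives $\psi = Q_-^{1/p'}|w|^{p-2}w$. Substituting these back into $u = \bR(|Q|^{1/p}(\phi-\psi))$ and expanding $|Q|^{1/p}(\phi-\psi) = Q_+^{1/p}\phi - Q_-^{1/p}\psi = \lambda Q_+|v|^{p-2}v - Q_-|w|^{p-2}w = Q_\lambda|u|^{p-2}u$ yields exactly \eqref{eqn:integral_equation}.

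The main obstacle, and the step I would treat most carefully, is \emph{nontriviality}: ruling out $u \equiv 0$ given only $(\phi,\psi)\neq(0,0)$. If $u = \bR(|Q|^{1/p}(\phi-\psi)) = 0$, then $\cK(\phi-\psi) = |Q|^{1/p}u = 0$, and the critical point equations force $\lambda^{1-p'}|\phi|^{p'-2}\phi = 0$ and $|\psi|^{p'-2}\psi = 0$ a.e., hence $\phi = 0$ and $\psi = 0$, contradicting $(\phi,\psi)\neq(0,0)$. This is the clean argument, and it is essentially forced by the structure once the pointwise critical point equations are in hand. I would also record that $u \in L^p(\RN)$ is immediate from the boundedness estimate \eqref{eqn:resolvent} for $\bR: L^{p'}(\RN)\to L^{p}(\RN)$ applied to $|Q|^{1/p}(\phi-\psi) \in L^{p'}(\RN)$, the membership following from $Q\in L^\infty$ and $\phi-\psi \in L^{p'}(\RN)$.

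A minor technical point worth verifying is that the algebraic inversion $\phi \mapsto |\phi|^{p'-2}\phi$ is a genuine bijection (it is, with inverse $s\mapsto |s|^{p-2}s$ since $(p'-1)(p-1)=1$), so that recovering $v$ from $\phi$ and vice versa is unambiguous and the substitution is reversible. With the pointwise equations established, the inversion identity, the $L^p$ bound, and the nontriviality dichotomy, the proposition follows directly.
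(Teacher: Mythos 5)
Your proposal is correct and takes essentially the same route as the paper: both arguments invert the dual substitution using the critical point equations together with the identity $(p'-1)(p-1)=1$ to show $Q_\lambda|u|^{p-2}u = |Q|^{\frac{1}{p}}(\phi-\psi)$, and then apply $\bR$ to recover the integral equation. Your explicit nontriviality step (if $u=0$ then $\cK(\phi-\psi)=0$, forcing $\phi=\psi=0$) is the natural argument which the paper leaves implicit, so it is a welcome addition rather than a deviation.
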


\begin{proof}
Let $(\phi,\psi) \in L^{p'}(A_{+})\times L^{p'}(A_{-})\setminus\{(0,0)\}$ be a critical point of $J_{\lambda}$. Thus we have
\begin{equation*}
 \lambda^{1-p'} |\phi|^{p'-2}\phi = \mathds{1}_{A_{+}}\cK(\phi- \psi),  \qquad   |\psi|^{p'-2}\psi =
 \mathds{1}_{A_-}\cK(\phi- \psi)
\end{equation*}
 as well as 
 \begin{align*}
  	Q_\lambda |u|^{p-2} u 
  	&= 
  	(\lambda \1_{A_+} - \1_{A_-}) |Q| |u|^{p-2} u 
  	\\
  	&=
  	(\lambda \1_{A_+} - \1_{A_-}) |Q|^\frac{1}{p} \cdot  ||Q|^\frac{1}{p} u|^{p-2} \cdot  |Q|^\frac{1}{p} u 
  	\\
  	&=
  	(\lambda \1_{A_+} - \1_{A_-}) |Q|^\frac{1}{p} \cdot  
  	||Q|^\frac{1}{p} \bR \left[|Q|^{1/p}(\phi-\psi) \right] |^{p-2} \cdot  |Q|^\frac{1}{p} \bR \left[|Q|^{1/p}(\phi-\psi) \right]
  	\\
  	&=
  	 (\lambda \1_{A_+} - \1_{A_-}) |Q|^\frac{1}{p} \cdot  
  	|\cK \left[\phi-\psi \right] |^{p-2} \cdot  \cK \left[\phi-\psi \right]
  	\\
  	&=
  	 (\lambda \1_{A_+} - \1_{A_-}) |Q|^\frac{1}{p} \cdot  
  	\left|	 \lambda^{1-p'} |\phi|^{p'-2}\phi + |\psi|^{p'-2}\psi \right|^{p-2} 
  	\cdot \left(    \lambda^{1-p'} |\phi|^{p'-2}\phi + |\psi|^{p'-2}\psi \right)
  	\\
  	&=
  	(\lambda \1_{A_+} - \1_{A_-}) |Q|^\frac{1}{p} \cdot  \left(   	 \lambda^{(1-p')(p-1)} \phi + \psi   	\right)
  	\\
  	&= 
  	|Q|^\frac{1}{p} \cdot  \left(  \phi - \psi   	\right).
  \end{align*}
  Applying $\bR$ then gives
  $
    \bR\left(Q_{\lambda}|u|^{p-2}u\right) = \bR\left(|Q|^{\frac{1}{p}}(\phi-\psi)\right) = u. 
  $
  Hence $u$ solves~\eqref{eqn:integral_equation}.
\end{proof}

  So we conclude that it remains to find nontrivial critical points of the functionals $J_\lambda$ for as many
  $\lambda>0$ as possible. This will be achieved with the Mountain Pass Theorem for families of
  $\cC^1$-functionals by Jeanjean and Toland~\cite{jeantoland98}.

\section{Saddle-point reduction}

In this section we perform the saddle-point reduction of $J_{\lambda}$ with respect to the $\psi$-variable. To
this end, we prove that for any fixed $\phi \in L^{p'}(A_{+})$ the functional $\psi\mapsto
J_{\lambda}(\phi,\psi)$ attains its maximum at some uniquely defined function in $L^{p'}(A_-)$ that we will call $Z(\phi)$ in the
following. 
We shall see that the positivity assumption $\int\limits_{\RN} \psi\cK\psi~dx \geq 0$ for all $\psi \in L^{p'}(A_{-})$ ensures that the functional $\psi\mapsto J_{\lambda}(\phi,\psi)$ is
strictly concave so that the global maximization with respect to $\psi$ is the only reasonable approach to
perform a saddle point reduction. We introduce the reduced functional $\tilde{J}_{\lambda}:L^{p'}(A_{+}) \to
\R$ via
\begin{equation}\label{eqn:reduced}
\tilde{J}_{\lambda}(\phi):= \sup\limits_{\psi \in L^{p'}(A_{-})}J_{\lambda}(\phi,\psi).
\end{equation}

\begin{proposition}\label{prop:sp_reduction}
Assume that $\cK : L^{p'}(\RN) \to L^{p}(\RN)$ is compact and that $ \int\limits_{A_-} \psi \cK \psi ~dx
\geq 0 $ for all $\psi\in L^{p'}(A_{-})$. Then for every $\phi \in L^{p'}(A_{+})$ there exists a unique
$Z(\phi) \in L^{p'}(A_{-})$ such that for all $\lambda > 0$ we have 
$$ 
  \tilde{J}_{\lambda}(\phi)
  = J_{\lambda}(\phi,Z(\phi)). 
$$ 
Moreover:
\begin{enumerate}[(i)]
\item For any $\phi \in L^{p'}(A_{+})$ the corresponding maximizer $Z(\phi)$ satisfies
\begin{equation} \label{eqn:maximizerbounded_prop} \left\|Z(\phi)\right\|_{p'} \leq \left(p'\beta \left\|\phi\right\|_{p'}\right)^{\frac{1}{p'-1}}\end{equation}
where $\beta$ is defined in \eqref{eqn:introalphabeta}.
\item  The map $Z : L^{p'}(A_{+}) \to L^{p'}(A_{-})$ is continuous.
\item The reduced functional $\tilde{J}_{\lambda} : L^{p'}(A_{+}) \to \R$ is of class $\cC^{1}$ with derivative
$$ \tilde{J}_{\lambda}'[h] = \partial_{1}J_{\lambda}(\phi,Z(\phi))[h]. $$
\end{enumerate}
\end{proposition}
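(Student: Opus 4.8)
The plan is to fix $\phi\in L^{p'}(A_+)$ and analyze $\psi\mapsto J_\lambda(\phi,\psi)$ by the direct method. Using the symmetry \eqref{eqn:symmetry} of $\cK$ I would write
\begin{equation*}
J_\lambda(\phi,\psi)=C(\phi)-\frac{1}{p'}\|\psi\|_{p'}^{p'}+\int\limits_{\RN}\phi\,\cK\psi\,dx-\frac12\int\limits_{\RN}\psi\,\cK\psi\,dx,
\end{equation*}
where $C(\phi)$ gathers the $\psi$-independent terms. The term $-\tfrac1{p'}\|\psi\|_{p'}^{p'}$ is strictly concave because $t\mapsto|t|^{p'}$ is strictly convex for $p'>1$, while $-\tfrac12\int\psi\,\cK\psi\,dx$ is concave by the positivity assumption in \eqref{eq:abstractassumptions}; hence $\psi\mapsto J_\lambda(\phi,\psi)$ is strictly concave. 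From the definition of $\beta$ in \eqref{eqn:introalphabeta} and $\int\psi\,\cK\psi\,dx\ge0$ one gets $J_\lambda(\phi,\psi)\le C(\phi)-\tfrac1{p'}\|\psi\|_{p'}^{p'}+\beta\|\phi\|_{p'}\|\psi\|_{p'}\to-\infty$ as $\|\psi\|_{p'}\to\infty$ since $p'>1$, so maximizing sequences are bounded. The functional is weakly upper semicontinuous: the norm is weakly lower semicontinuous, and since $\cK$ is compact, $\psi_n\rightharpoonup\psi$ forces $\cK\psi_n\to\cK\psi$ in $L^p$, so both the linear and the quadratic term pass to the limit. Reflexivity of $L^{p'}(A_-)$ then yields a maximizer, and strict concavity makes it unique; this defines $Z(\phi)$.

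For (i) I would not differentiate but compare $Z(\phi)$ with the competitor $\psi=0$. From $J_\lambda(\phi,Z(\phi))\ge J_\lambda(\phi,0)$ and the expansion above,
\begin{equation*}
\frac1{p'}\|Z(\phi)\|_{p'}^{p'}\le\int\limits_{\RN}\phi\,\cK Z(\phi)\,dx-\frac12\int\limits_{\RN}Z(\phi)\,\cK Z(\phi)\,dx\le\beta\|\phi\|_{p'}\|Z(\phi)\|_{p'},
\end{equation*}
where the last step discards the nonnegative quadratic term and uses the definition of $\beta$ together with the symmetry of $\cK$. Dividing by $\|Z(\phi)\|_{p'}$ and rearranging gives exactly \eqref{eqn:maximizerbounded_prop}.

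For the continuity (ii), which I expect to be the \emph{main obstacle}, I would first record the Euler--Lagrange equation $\partial_2J_\lambda(\phi,Z(\phi))=0$, i.e.
\begin{equation*}
|Z(\phi)|^{p'-2}Z(\phi)=\1_{A_-}\cK\bigl(\phi-Z(\phi)\bigr).
\end{equation*}
Given $\phi_n\to\phi$ in $L^{p'}(A_+)$, the bound (i) shows $(Z(\phi_n))$ is bounded, so along a subsequence $Z(\phi_n)\rightharpoonup\psi_0$ in $L^{p'}(A_-)$. Since $\cK$ is compact, $\cK(\phi_n-Z(\phi_n))\to\cK(\phi-\psi_0)$ strongly in $L^p$, so the right-hand sides of the Euler--Lagrange equations converge strongly in $L^p$. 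The duality map $\psi\mapsto|\psi|^{p'-2}\psi$ is a homeomorphism from $L^{p'}(A_-)$ onto $L^{p}(A_-)$ with continuous inverse $\eta\mapsto|\eta|^{p-2}\eta$ (here $(p-1)(p'-1)=1$); applying this inverse upgrades the weak convergence to strong convergence $Z(\phi_n)\to\psi_0$ in $L^{p'}(A_-)$. Passing to the limit in the equation identifies $\psi_0$ as the unique maximizer $Z(\phi)$, and the usual subsequence argument gives $Z(\phi_n)\to Z(\phi)$ for the whole sequence.

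Finally (iii) follows from a standard envelope argument that avoids differentiating $Z$. For $t>0$ and $h\in L^{p'}(A_+)$, maximality of $Z$ gives
\begin{equation*}
J_\lambda(\phi+th,Z(\phi))-J_\lambda(\phi,Z(\phi))\le\tilde J_\lambda(\phi+th)-\tilde J_\lambda(\phi)\le J_\lambda(\phi+th,Z(\phi+th))-J_\lambda(\phi,Z(\phi+th)).
\end{equation*}
Dividing by $t$, the left side converges to $\partial_1J_\lambda(\phi,Z(\phi))[h]$ because $J_\lambda$ is $\cC^1$, while the right side, rewritten by the mean value theorem as $\partial_1J_\lambda(\phi+\theta_t th,Z(\phi+th))[h]$ for some $\theta_t\in(0,1)$, converges to the same limit using the continuity of $Z$ from (ii) and the continuity of $\partial_1J_\lambda$. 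Hence the Gâteaux derivative of $\tilde J_\lambda$ exists and equals $\partial_1J_\lambda(\phi,Z(\phi))[h]$; since $\phi\mapsto\partial_1J_\lambda(\phi,Z(\phi))$ is continuous (again by (ii) and $J_\lambda\in\cC^1$), $\tilde J_\lambda$ is of class $\cC^1$ with the asserted derivative.
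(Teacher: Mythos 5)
Your proof is correct, and for parts (i), the existence/uniqueness step, and part (iii) it runs essentially parallel to the paper's: the paper also bounds maximizing sequences by comparison with $\psi=0$, proves uniqueness via the midpoint computation that is exactly your strict-concavity argument (strict convexity of $t\mapsto|t|^{p'}$ plus nonnegativity of the quadratic form), and proves (iii) with the same two-sided difference-quotient sandwich. The genuine difference is in (ii). The paper never writes down the Euler--Lagrange equation; it argues purely variationally, showing $\limsup_n \tilde J_\lambda(\phi_n)\leq J_\lambda(\phi,\psi_0)\leq \tilde J_\lambda(\phi)$ by weak lower semicontinuity and $\liminf_n \tilde J_\lambda(\phi_n)\geq \tilde J_\lambda(\phi)$ using the competitor $Z(\phi)$, then deduces $\psi_0=Z(\phi)$ from uniqueness of maximizers and upgrades weak to strong convergence via $\|Z(\phi_n)\|_{p'}\to\|\psi_0\|_{p'}$ together with uniform convexity of $L^{p'}$. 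You instead pass to the limit in the optimality condition $|Z(\phi_n)|^{p'-2}Z(\phi_n)=\1_{A_-}\cK(\phi_n-Z(\phi_n))$ and exploit that the duality map is a homeomorphism from $L^{p'}(A_-)$ onto $L^{p}(A_-)$; compactness of $\cK$ then hands you strong convergence directly. Both work; the paper's route needs only the variational characterization of $Z$ (no differentiation in $\psi$), whereas yours is more explicit and transfers the compactness to the sequence $Z(\phi_n)$ in one step. Three small points you should make explicit: first, identifying the limit $\psi_0$ as $Z(\phi)$ from the Euler--Lagrange equation uses the standard fact that a critical point of a concave G\^{a}teaux-differentiable functional is its global maximizer (available to you via the strict concavity you established, but currently tacit); second, the statement asks for a single $Z(\phi)$ valid for all $\lambda>0$, which follows in your setup because the $\psi$-dependent part of $J_\lambda$ is $\lambda$-free --- your decomposition shows this but you never say it; third, in (iii) you only treat $t>0$, and the two-sided G\^{a}teaux derivative follows by applying the same estimate with $h$ replaced by $-h$ and using linearity of the candidate derivative $h\mapsto\partial_1 J_\lambda(\phi,Z(\phi))[h]$.
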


\begin{proof}
We first establish the existence of a maximizer. 
So fix $\phi \in L^{p'}(A_{+})$ and consider a maximizing
sequence $(\psi_{n})_{n} \subset L^{p'}(A_{-})$. Using $J_{\lambda}(\phi,0) \leq \sup\limits_{\psi \in
L^{p'}(A_{-})}J_{\lambda}(\phi,\psi) = J_{\lambda}(\phi,\psi_{n})+ o(1)$ as $n \to \infty$  we obtain 
$$ o(1) \leq -\frac{1}{p'}\|\psi_n\|_{p'}^{p'} + \int_{\RN} \phi\mathcal K\psi_n - \frac{1}{2} \int_{\RN}
  \psi_n \mathcal K\psi_n
  \leq -\frac{1}{p'}\|\psi_n\|_{p'}^{p'} + \beta \|\phi\|_{p'} \|\psi_n\|_{p'}   \quad (n \to \infty).
$$
Here we used the nonnegativity assumption on $\cK$ as well as~\eqref{eqn:resolvent}. Hence,
\begin{equation}\label{eqn:maximizer_bounded}
  \|\psi_n\|_{p'} \leq (p'\beta\|\phi\|)^{\frac{1}{p'-1}} + o(1)  \qquad (n \to \infty),
\end{equation}
so $(\psi_{n})_{n}$ is bounded. Passing to a subsequence we find $\psi^{\ast} \in L^{p'}(A_{-})$ such that
$\psi_{n} \weakto \psi^{\ast}$ in $L^{p'}(A_{-})$ as $n \to \infty.$ Using the compactness of $\cK$ and
the weak lower semicontinuity of the norm we find
\begin{align*}
     &\sup_{\psi \in L^{p'}(A_{-})} J_\lambda(\phi,\psi) 
 		\\    
     &\quad 
     =    \frac{\lambda^{1-p'}}{p'} \|\phi\|_{p'}^{p'} 
     - \frac{1}{p'} \|\psi_n\|_{p'}^{p'}
        - \frac{1}{2}   \int_{\RN} \phi\mathcal K \phi ~dx
        +  \int_{\RN} \phi\mathcal K\psi_n ~dx
        - \frac{1}{2} \int_{\RN}  \psi_n\mathcal K\psi_n ~dx  + o(1)
         \\
     &\quad
     =  \frac{\lambda^{1-p'}}{p'} \|\phi\|_{p'}^{p'}
      - \frac{1}{p'} \|\psi_n\|_{p'}^{p'}
        - \frac{1}{2}   \int_{\RN} \phi\mathcal K \phi ~dx
        +  \int_{\RN} \phi\mathcal K\psi^\ast ~dx
        - \frac{1}{2} \int_{\RN}  \psi^\ast \mathcal K\psi^\ast ~dx  + o(1)
         \\
     &\quad  
     \leq \frac{\lambda^{1-p'}}{p'} \|\phi\|_{p'}^{p'}   
     - \frac{1}{p'} \|\psi^\ast\|_{p'}^{p'}
        - \frac{1}{2}   \int_{\RN} \phi\mathcal K \phi ~dx
        +  \int_{\RN} \phi\mathcal K\psi^\ast ~dx
        - \frac{1}{2} \int_{\RN}  \psi^\ast \mathcal K\psi^\ast ~dx  + o(1)
         \\
    &\quad
    = J_\lambda (\phi,\psi^\ast) + o(1).
\end{align*}
Hence the supremum is attained at $\psi^{\ast}$. Since equality must hold in the above estimate we conclude
$\left\|\psi_{n}\right\|_{p'} \to \left\|\psi^{\ast}\right\|_{p'}$, whence $\psi_{n} \to \psi^{\ast}$ in
$L^{p'}(A_{-})$ as $n \to \infty$.  This shows the existence of a maximizer satisfying the estimate stated
in~(i). So (i) is proved once we have established the uniqueness of the maximizer.

\medskip

To this end assume that $\psi^\ast, \psi^\dagger \in L^{p'}(A_-)$ are maximizers. Then we have
\begin{align*}
	0 
	&\leq
	\frac{1}{2} J_\lambda (\phi, \psi^\ast) + \frac{1}{2}  J_\lambda (\phi, \psi^\dagger) 
	-  J_\lambda \left(\phi, \frac{1}{2}(\psi^\ast + \psi^\dagger)\right) 
	\\
	&=
	\frac{1}{p'}\left(\left\|\frac{\psi^\ast + \psi^\dagger}{2}\right\|^{p'}_{p'} 
	- \frac{1}{2} \|\psi^\ast \|^{p'}_{p'} - \frac{1}{2} \| \psi^\dagger\|^{p'}_{p'}  \right)
	\\
	& \quad \quad
	+ \frac{1}{2} \left( 
	 \int_{\RN} \frac{\psi^\ast + \psi^\dagger}{2} 
	 \mathcal{K}\left[\frac{\psi^\ast + \psi^\dagger}{2}\right] \dx
	- \frac{1}{2} \int_{\RN} \psi^\ast \mathcal{K} \psi^\ast \dx 
	- \frac{1}{2} \int_{\RN} \psi^\dagger \mathcal{K} \psi^\dagger \dx  \right)
	\\
	&=
	\frac{1}{p'}\left( \left\|\frac{\psi^\ast + \psi^\dagger}{2}\right\|^{p'}_{p'}  -  \frac{1}{2} \|\psi^\ast
	\|^{p'}_{p'} - \frac{1}{2} \| \psi^\dagger\|^{p'}_{p'} \right)
	- \frac{1}{8}  
	\int_{\RN} (\psi^\ast - \psi^\dagger) \mathcal{K} [\psi^\ast - \psi^\dagger] \dx \\
	&\leq 
	\frac{1}{p'}\left( \left\|\frac{\psi^\ast + \psi^\dagger}{2}\right\|^{p'}_{p'}  -  \frac{1}{2} \|\psi^\ast
	\|^{p'}_{p'} - \frac{1}{2} \| \psi^\dagger\|^{p'}_{p'} \right) \\
	&\leq 0,  
\end{align*}
where we have used the non-negativity condition in the second last step and the convexity of $z\mapsto |z|^{p'}$
in the last step. So we have equality in each estimate and conclude $\psi^\ast = \psi^\dagger$. 
Note that the maximizer does not depend on $\lambda $ since the only $\lambda $-dependent term in
$J_\lambda(\phi,\psi)$ is  $\frac{\lambda^{1-p'}}{p'}\|\phi\|_{p'}^{p'}$, which is independent of $\psi$. 

\medskip

We now prove (ii), i.e., the continuity of the map $Z$: Assume $\phi_{n} \to \phi$ in $L^{p'}(A_{+})$ and  let
$(\psi_{n})_{n} := (Z(\phi_n))_n \subset L^{p'}(A_{-})$ be the associated maximizers. By \eqref{eqn:maximizer_bounded}, the
sequence $(\psi_{n})_{n}$ is bounded and after passing to a subsequence we may assume $\psi_{n} \weakto \psi_{0}$ in
$L^{p'}(A_{-})$ as $n \to \infty$. Arguing as above we deduce
\begin{equation}\label{eqn:uppermax}
     \limsup_{n\to\infty} \tilde J_\lambda (\phi_n)
     =  \limsup_{n\to\infty} J_\lambda (\phi_n,\psi_n)   
     =  \limsup_{n\to\infty} J_\lambda (\phi,\psi_n) 
     \leq J_\lambda (\phi,\psi_{0}) 
     \leq \tilde J_\lambda (\phi).
\end{equation} 
using weak lower semicontinuity and $\liminf\limits_{n \to \infty} \left\|\psi_{n}\right\|_{p'} \geq
\left\|\psi_{0}\right\|_{p'}$.  On the other hand, with the special choice $\psi = Z(\phi)$ we obtain
\begin{equation} \label{eqn:lowermax}
      \liminf_{n\to\infty} \tilde J_\lambda (\phi_n)
     \geq J_\lambda (\phi,\psi)  
     = J_\lambda (\phi,Z(\phi))
     = \tilde J_\lambda (\phi).
\end{equation}
Combining both estimates gives $\left\|\psi_{n}\right\|_{p'} \to \left\|\psi_{0}\right\|_{p'}$ as well as
$\tilde{J}_{\lambda }(\phi_{n}) \to \tilde{J}_{\lambda }(\phi)$ as $n \to \infty$. Thus we have equality in
\eqref{eqn:uppermax}, \eqref{eqn:lowermax}. Since maximizers are unique, we obtain $\psi_{0} = Z(\phi)$ and
in particular $Z(\phi_n)=\psi_{n} \to \psi_{0}=Z(\phi_0)$ in $L^{p'}(A_{-})$ as $n \to \infty$. 

\medskip

We are left to prove (iii). Let $h \in L^{p'}(A_{+})$ be arbitrary. We can estimate the difference quotients as follows:
\begin{align*}
    \liminf_{\tau\to 0} \frac{\tilde J_\lambda (\phi+\tau h)-\tilde J_\lambda (\phi)}{\tau}
    &\geq \liminf_{\tau\to 0} \frac{J_\lambda (\phi+\tau h,Z(\phi))-J_\lambda (\phi,Z(\phi))}{\tau} \\
    &=  \liminf_{\tau\to 0} \int_0^1 
    \partial_1 J_\lambda (\phi+\tau \sigma h,Z(\phi))[h] \,\mathrm{d}\sigma \\  
    &= \partial_1 J_\lambda (\phi,Z(\phi))[h], \\ 
    \limsup_{\tau\to 0} \frac{\tilde J_\lambda (\phi+\tau h)-\tilde J_\lambda (\phi)}{\tau}
    &\leq \limsup_{\tau\to 0} \frac{J_\lambda (\phi+\tau h ,Z(\phi+\tau h))-
    J_\lambda (\tilde\phi,Z(\phi+\tau h))}{\tau} \\
    &= \limsup_{\tau\to 0}  \int_0^1 \partial_1
    J_\lambda (\phi^*+\tau \sigma h,Z(\phi+\tau h))[h]\,\mathrm{d}\sigma \\
    &= \partial_1 J_\lambda (\phi,Z(\phi))[h].  
\end{align*} 
Here we used that $Z$ is continuous and that $\partial_1 J_\lambda $ is continuous,
see~\cite[Proposition~9]{SzuWeth_Nehari} for a similar computation. We
conclude that $\tilde J_\lambda $ is  G\^{a}teaux-differentiable with continuous derivative $\phi \mapsto
\partial_1 J_\lambda (\phi,Z(\phi))[\cdot]$, see Proposition~\ref{prop:sp_reduction}~(iii). 
Hence, the reduced functional $\tilde J_\lambda $ is continuously (Fr\'{e}chet-)differentiable with
\begin{align*}
    \tilde J_\lambda '(\phi)[h]
    = \partial_1 J_\lambda (\phi,Z(\phi))[h]
    \qquad\forall h\in L^{p'}(A_+)
\end{align*} 
as claimed.  
\end{proof}

Notice that the condition $\int_{\R^N} \psi\mathcal K\psi\dx \geq 0$ is also necessary for the existence of
a global maximizer of $\psi\mapsto J_\lambda(\phi,\psi)$ because otherwise this functional is unbounded from above.

\section{Palais-Smale sequences for the reduced functional}

In view of the results of the previous sections, we obtain a solution  to our problem by
proving the existence of a nontrivial critical point  of the reduced functional $\tilde{J}_{\lambda } :
L^{p'}(A_{+}) \to \R$ introduced in~\eqref{eqn:reduced}. This will be done via Mountain-pass techniques for monotone
families of functionals originating from the work of Jeanjean and Toland~\cite{jeantoland98}.

\begin{defi} \label{def:MPGeometry}
    Let $X$ be a Banach space, $M\subset\R$ a compact interval. Then the family $(I_\nu)_{\nu\in
    M}$ of $\cC^1$-functionals on $X$ is said to have the Mountain Pass Geometry if there exist $v_1,v_2\in X$
    such that for all $\nu\in M$ it holds
    $$
      c_\nu := \inf_{\gamma\in \Gamma} \sup_{t\in [0,1]} I_\nu(\gamma(t)) 
      >\max\{I_\nu(v_1),I_\nu(v_2)\},
    $$
    where $\Gamma:=\{\gamma\in C([0,1],X): \gamma(0)=v_1,\gamma(1)=v_2\}$.
  \end{defi}  
  
\begin{theorem}{(Jeanjean, Toland) \cite[Theorem 2.1]{jeantoland98}}\label{thm:JeanjeanToland}
    Assume that $X$ is a Banach space, $M\subset\R$ a compact interval and 
     $(I_\nu)_{\nu \in  M}$ a family of $\cC^1$-functionals on $X$ having the Mountain Pass Geometry. 	 Assume
     further that $(I_\nu)_{\nu \in  M}$ has the following property:
	\begin{align}\label{eqn:conditionH}\tag{\textbf{H}}
	\begin{aligned}
			&\text{For every sequence } (\nu_n,\phi_n)\in M \times X \text{ with } 
			\nu_n\nearrow \nu_*\in M \text{ and with } 
			\\
			& \quad 
			-I_{\nu_*}(\phi_n),
			\quad I_{\nu_n}(\phi_n),
			\quad \frac{I_{\nu_n}(\phi_n)-I_{\nu_*}(\phi_n)}{\nu_*-\nu_n}
     	  	\quad\text{ bounded from above, }
     	  	\\
     	  	& \text{the sequence  } (\phi_n) \text{ is bounded itself, and } 
     	  	\limsup_{n\to\infty} (I_{\nu_*}(\phi_n)-I_{\nu_n}(\phi_n))\leq 0.
	\end{aligned}
	\end{align}	     
     Then for almost all $\nu\in M$ there is a bounded Palais-Smale sequence (BPS) for $I_\nu$ at the level
     $c_{\nu}$.
\end{theorem}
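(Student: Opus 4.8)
The plan is to convert the monotone dependence of the family on the parameter into almost-everywhere differentiability of the mountain pass level, and then to extract, at each differentiability point, a bounded Palais--Smale sequence directly from hypothesis~\eqref{eqn:conditionH}. The structural feature underlying the whole scheme is that $\nu\mapsto I_\nu(\phi)$ is non-increasing for every fixed $\phi\in X$ (this monotonicity is part of the framework; in the present application it stems from the $\lambda$-dependent term of $J_\lambda$, which decreases in $\lambda$ since $p'>1$). The monotonicity passes to the level: for $\nu_1<\nu_2$ every $\gamma\in\Gamma$ satisfies $\sup_t I_{\nu_1}(\gamma(t))\ge\sup_t I_{\nu_2}(\gamma(t))$, whence $c_{\nu_1}\ge c_{\nu_2}$. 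A monotone function on an interval is differentiable almost everywhere, so it suffices to construct a bounded Palais--Smale sequence for $I_{\nu_*}$ at level $c_{\nu_*}$ at each interior point $\nu_*\in M$ where $\nu\mapsto c_\nu$ is differentiable; differentiability in particular forces $c_{\nu_n}\to c_{\nu_*}$ for any $\nu_n\to\nu_*$.

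Fix such a $\nu_*$ and a sequence $\nu_n\nearrow\nu_*$. For each $n$ I would choose a nearly optimal path $\gamma_n\in\Gamma$ with $\max_t I_{\nu_n}(\gamma_n(t))\le c_{\nu_n}+(\nu_*-\nu_n)$. Since $I_{\nu_*}\le I_{\nu_n}$ pointwise, the estimate $c_{\nu_*}\le\max_t I_{\nu_*}(\gamma_n(t))\le c_{\nu_n}+(\nu_*-\nu_n)$ shows $(\gamma_n)$ is a minimizing sequence of paths for the min-max value $c_{\nu_*}$. Let $\phi_n=\gamma_n(t_n)$ be a maximum point of $t\mapsto I_{\nu_*}(\gamma_n(t))$. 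Then $I_{\nu_*}(\phi_n)\ge c_{\nu_*}$ and $I_{\nu_n}(\phi_n)\le c_{\nu_n}+(\nu_*-\nu_n)$, so $I_{\nu_n}(\phi_n)$ and $-I_{\nu_*}(\phi_n)$ are bounded above and
$$\frac{I_{\nu_n}(\phi_n)-I_{\nu_*}(\phi_n)}{\nu_*-\nu_n}\le\frac{c_{\nu_n}-c_{\nu_*}}{\nu_*-\nu_n}+1 ,$$
whose right-hand side stays bounded precisely because $c$ is differentiable at $\nu_*$. Hypothesis~\eqref{eqn:conditionH} then yields a uniform bound $\|\phi_n\|\le R$; together with $c_{\nu_*}\le I_{\nu_*}(\phi_n)\le c_{\nu_n}+(\nu_*-\nu_n)$ this forces $I_{\nu_*}(\phi_n)\to c_{\nu_*}$.

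It remains to upgrade these bounded points at the correct energy level into a genuine Palais--Smale sequence, i.e.\ to also gain $I_{\nu_*}'(\phi_n)\to 0$. The decisive observation is that \eqref{eqn:conditionH}, applied to \emph{every} admissible selection, forces the whole near-top arc $\{\,t:\ I_{\nu_*}(\gamma_n(t))\ge c_{\nu_*}-(\nu_*-\nu_n)\,\}$ to lie in a fixed ball $\{\|u\|\le R\}$: any such point still satisfies the three one-sided bounds with difference quotient controlled by $\tfrac{c_{\nu_n}-c_{\nu_*}}{\nu_*-\nu_n}+2$, so an unbounded selection would contradict the conclusion of \eqref{eqn:conditionH}. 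I would then argue by contradiction: were there no bounded Palais--Smale sequence for $I_{\nu_*}$ at level $c_{\nu_*}$, there would be $\delta,\varepsilon_0>0$ with $\|I_{\nu_*}'(u)\|\ge\delta$ on $\{\,\|u\|\le 2R,\ |I_{\nu_*}(u)-c_{\nu_*}|\le\varepsilon_0\,\}$. For large $n$ one has $(\nu_*-\nu_n)<\varepsilon_0$, so a pseudo-gradient deformation $\eta$ built from a vector field cut off to be supported in this bounded slab lowers $I_{\nu_*}$ along $\gamma_n$ by more than $(\nu_*-\nu_n)$ on the near-top arc, while fixing the endpoints $v_1,v_2$ (which sit strictly below $c_{\nu_*}$ by the Mountain Pass Geometry). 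The deformed path $\eta\circ\gamma_n\in\Gamma$ would then satisfy $\max_t I_{\nu_*}(\eta(\gamma_n(t)))<c_{\nu_*}$, contradicting $\max_t I_{\nu_*}(\gamma_n(t))\ge c_{\nu_*}$. The main obstacle, and the only genuinely delicate point, is engineering this deformation so that it simultaneously respects the \emph{shrinking} height window $(\nu_*-\nu_n)$, keeps the flow inside the bounded slab where the gradient estimate is available (via the cut-off), and leaves the endpoints untouched; once this is arranged the contradiction closes, producing a bounded Palais--Smale sequence at level $c_{\nu_*}$ for almost every $\nu_*\in M$, which is the assertion of the theorem.
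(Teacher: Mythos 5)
Your attempt cannot be compared with a proof in the paper, because the paper contains none: Theorem~\ref{thm:JeanjeanToland} is imported verbatim from Jeanjean--Toland. Measured against the standard proof of the monotonicity trick, your skeleton --- a.e.\ differentiability of $\nu\mapsto c_\nu$, nearly optimal paths, boundedness of near-top points via the structural hypothesis, and a quantitative deformation --- is the right one, but there are two genuine gaps. The first: you assume throughout that $\nu\mapsto I_\nu(\phi)$ is non-increasing for every fixed $\phi$, calling this ``part of the framework''. It is not part of the statement; the only hypotheses are the Mountain Pass Geometry of Definition~\ref{def:MPGeometry} and condition~\eqref{eqn:conditionH}. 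Condition~\eqref{eqn:conditionH} is exactly the abstract surrogate for pointwise monotonicity: its second conclusion, $\limsup_{n\to\infty}\bigl(I_{\nu_*}(\phi_n)-I_{\nu_n}(\phi_n)\bigr)\leq 0$, is trivially satisfied by any monotone family, and your proof never invokes it --- the telltale sign that you have replaced the stated hypothesis by a strictly stronger one. Without monotonicity you have no claim to monotonicity of $\nu\mapsto c_\nu$, hence no a.e.\ differentiability, and your construction collapses at its first step; what you prove is a special case, sufficient for the paper's application to $(\tilde J_\lambda)_\lambda$ (which is indeed non-increasing in $\lambda$), but not the theorem as stated. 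A proof of the stated theorem must extract the regularity of $\nu\mapsto c_\nu$ from \eqref{eqn:conditionH} itself: for instance, \eqref{eqn:conditionH} forces $c_{\nu_*}\leq\liminf_{\nu\nearrow\nu_*}c_\nu$ (otherwise points maximizing $I_{\nu_*}$ along nearly optimal paths for $I_{\nu_n}$ satisfy the three upper bounds yet violate the $\limsup$ conclusion); the existence, for a.e.\ $\nu_*$, of sequences $\nu_n\nearrow\nu_*$ with $(c_{\nu_n}-c_{\nu_*})/(\nu_*-\nu_n)$ bounded above then requires an argument valid for arbitrary, possibly non-monotone functions (e.g.\ the Denjoy--Young--Saks theorem on Dini derivatives); and the $\limsup$ part of \eqref{eqn:conditionH} must be used exactly where you used $I_{\nu_*}\leq I_{\nu_n}$ pointwise, namely to bound $\max_t I_{\nu_*}(\gamma_n(t))$ by $c_{\nu_n}+o(1)$.

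The second gap is that the step which actually produces the Palais--Smale sequence is announced rather than carried out. You describe the cut-off pseudo-gradient deformation and then write that ``once this is arranged the contradiction closes'', explicitly labelling it the only genuinely delicate point. But this is the core of the proof, not a deferrable remark: one must check that the required descent is only of size $O(\nu_*-\nu_n)$, so that the deformation time is $O((\nu_*-\nu_n)/\delta)$ and displacements are $o(1)$, whence the near-top arc (contained in a fixed ball by \eqref{eqn:conditionH}) never leaves the region where both the lower bound $\|I_{\nu_*}'\|\geq\delta$ and the uncut vector field are available; that $I_{\nu_*}$ does not increase along the flow, so that path points outside the slab stay below $c_{\nu_*}-(\nu_*-\nu_n)$; and that the height window is smaller than $c_{\nu_*}-\max\{I_{\nu_*}(v_1),I_{\nu_*}(v_2)\}$, so that the endpoints are fixed and the deformed path remains admissible. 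Until these verifications are written down, the conclusion has been asserted, not established.
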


We shall apply this result to $X = L^{p'}(A_{+})$ and the family of $\cC^{1}$-functionals
$I_\lambda:= \tilde{J}_\lambda:X\to\R$. We first verify the Mountain Pass Geometry for
parameters $\lambda\in (\lambda_0,\infty)$ where $\lambda_0= (2\beta\alpha^{-1})^p$. 
Let us recall that $\alpha,\beta$ were defined as
$$ 
 \alpha:= \max\limits_{\left\|\phi\right\|_{p'}  = 1, \atop \supp(\phi)\subset A_+}\int\limits_{\RN} \phi
  \cK \phi ~dx, \qquad \beta:= \max\limits_{\left\|\phi\right\|_{p'} 
  = \left\|\psi\right\|_{p'} =1, \atop \supp(\phi)\subset A_+,\supp(\psi)\subset A_-} \int\limits_{\RN}
  \phi \cK \psi ~dx.
$$

%

 \begin{proposition}\label{prop:MP}
Let  $\cK : L^{p'}(\RN) \to L^{p}(\RN)$ be compact and assume $\int\limits_{\RN}\psi \cK \psi ~dx \geq 0$ for
all $\psi \in L^{p'}(A_{-})$. Then, for any given compact subinterval $M\subset (\lambda_0,\infty)$, the
family of functionals $(\tilde{J}_{\lambda})_{\lambda \in M}$ has the Mountain Pass Geometry according to
Definition~\ref{def:MPGeometry}.
\end{proposition}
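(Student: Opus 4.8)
The plan is to exhibit two points $v_1,v_2\in L^{p'}(A_+)$ realizing the required geometry \emph{uniformly} on the compact interval $M=[\lambda_1,\lambda_2]\subset(\lambda_0,\infty)$. I take $v_1=0$; since $J_\lambda(0,\psi)=-\frac1{p'}\|\psi\|_{p'}^{p'}-\frac12\int_{\RN}\psi\cK\psi\,dx\le 0$ by the non-negativity assumption, with equality at $\psi=0$, we have $\tilde J_\lambda(0)=0$ (and $Z(0)=0$). The whole argument rests on two elementary bounds for $\tilde J_\lambda(\phi)=\sup_\psi J_\lambda(\phi,\psi)$. For a lower bound I insert the admissible choice $\psi=0$ and use $\int_{\RN}\phi\cK\phi\,dx\le\alpha\|\phi\|_{p'}^2$, giving $\tilde J_\lambda(\phi)\ge\frac{\lambda^{1-p'}}{p'}\|\phi\|_{p'}^{p'}-\frac\alpha2\|\phi\|_{p'}^2$. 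For an upper bound I expand the quadratic form using the symmetry of $\cK$, discard the term $-\frac12\int_{\RN}\psi\cK\psi\,dx\le 0$, estimate $\int_{\RN}\phi\cK\psi\,dx\le\beta\|\phi\|_{p'}\|\psi\|_{p'}$, and maximize $-\frac1{p'}s^{p'}+\beta\|\phi\|_{p'}s$ over $s=\|\psi\|_{p'}\ge0$; since $(p-1)p'=p$ this yields $\tilde J_\lambda(\phi)\le\frac{\lambda^{1-p'}}{p'}\|\phi\|_{p'}^{p'}-\frac12\int_{\RN}\phi\cK\phi\,dx+\frac{\beta^p}{p}\|\phi\|_{p'}^p$.

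The barrier near the origin follows from the lower bound and $p'<2<p$. As $1-p'<0$ the factor $\lambda^{1-p'}$ is decreasing, hence $\ge\lambda_2^{1-p'}$ for all $\lambda\in M$, so the lower bound is controlled below, uniformly in $\lambda\in M$, by the single function $t\mapsto\frac{\lambda_2^{1-p'}}{p'}t^{p'}-\frac\alpha2 t^2$ of $t=\|\phi\|_{p'}$. Because $p'<2$ this is positive on an interval $(0,r_1)$ and attains a positive maximum $m>0$ at some $\rho\in(0,r_1)$. Thus $\tilde J_\lambda\ge m$ on the sphere $\|\phi\|_{p'}=\rho$ and $\tilde J_\lambda>0$ on the punctured ball $0<\|\phi\|_{p'}<r_1$, both uniformly for $\lambda\in M$.

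For the far point I take a maximizer $\phi_0$ of $\alpha$ (which exists by compactness of $\cK$), normalized to $\|\phi_0\|_{p'}=1$, $\int_{\RN}\phi_0\cK\phi_0\,dx=\alpha$, and set $v_2=T\phi_0$. The upper bound gives $\tilde J_\lambda(T\phi_0)\le g_\lambda(T):=\frac{\lambda^{1-p'}}{p'}T^{p'}-\frac\alpha2 T^2+\frac{\beta^p}{p}T^p$. I expect this to be the crux. Writing $g_\lambda(T)=T^{p'}\big(\frac{\lambda^{1-p'}}{p'}-h(T)\big)$ with $h(T)=\frac\alpha2 T^{2-p'}-\frac{\beta^p}{p}T^{p-p'}$, it suffices to find $T$ with $\frac{\lambda^{1-p'}}{p'}\le h(T)$. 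Computing the interior maximizer of $h$ and using the identities $2-p'=\frac{p-2}{p-1}$ and $p-p'=\frac{p(p-2)}{p-1}$ (so that $\frac{2-p'}{p-p'}=\frac1p$) collapses the maximizer to $T_{\mathrm{opt}}=(\frac{\alpha}{2\beta^p})^{1/(p-2)}$ and yields $p'\,h(T_{\mathrm{opt}})=\frac\alpha2(\frac{\alpha}{2\beta^p})^{1/(p-1)}$; the condition $\lambda^{1-p'}\le p'\,h(T_{\mathrm{opt}})$ then rearranges \emph{precisely} to $\lambda\ge(2\beta\alpha^{-1})^{p}=\lambda_0$. Hence for every $\lambda\in M$ (where $\lambda\ge\lambda_1>\lambda_0$) we get $g_\lambda(T_{\mathrm{opt}})<0$, and since $g_\lambda(T)$ is decreasing in $\lambda$ at fixed $T$, the fixed point $v_2:=T_{\mathrm{opt}}\phi_0$ satisfies $\tilde J_\lambda(v_2)<0$ for all $\lambda\in M$. (The degenerate case $\beta=0$, where $\lambda_0=0$ and $g_\lambda(T)\to-\infty$ outright, is even easier.)

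Finally I assemble the geometry. Since $\tilde J_\lambda>0$ on $0<\|\phi\|_{p'}<r_1$ while $\tilde J_\lambda(v_2)<0$, necessarily $\|v_2\|_{p'}=T_{\mathrm{opt}}\ge r_1>\rho$, so every path $\gamma\in\Gamma$ joining $0$ to $v_2$ meets the sphere $\|\phi\|_{p'}=\rho$; therefore $\sup_{t\in[0,1]}\tilde J_\lambda(\gamma(t))\ge m$ and $c_\lambda\ge m$. As $\max\{\tilde J_\lambda(v_1),\tilde J_\lambda(v_2)\}=\max\{0,\tilde J_\lambda(v_2)\}=0<m\le c_\lambda$ for every $\lambda\in M$, the family $(\tilde J_\lambda)_{\lambda\in M}$ has the Mountain Pass Geometry. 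The only delicate point is the three-term optimization identifying the threshold $\lambda_0$; the rest is the standard near-origin/far-field dichotomy.
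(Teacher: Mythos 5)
Your proposal is correct and takes essentially the same route as the paper: the same endpoints $v_1=0$ and $v_2=R\phi_0$ with $\phi_0$ a normalized maximizer of $\alpha$ and $R=T_{\mathrm{opt}}=\left(\tfrac{\alpha}{2\beta^p}\right)^{1/(p-2)}$ exactly as in the paper, the same lower bound $\tilde J_\lambda(\phi)\geq J_\lambda(\phi,0)\geq \tfrac{\lambda^{1-p'}}{p'}\|\phi\|_{p'}^{p'}-\tfrac{\alpha}{2}\|\phi\|_{p'}^2$ for the ridge, and the same upper bound (discard $\int\psi\cK\psi$, estimate via $\beta$, optimize in $\|\psi\|_{p'}$) leading to the identical threshold computation that pins down $\lambda_0=(2\beta\alpha^{-1})^p$. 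The only cosmetic deviations are that you use a single $\lambda$-independent radius $\rho$ (via $\lambda_2^{1-p'}$) where the paper uses $\lambda$-dependent radii $r_\lambda$, you derive $T_{\mathrm{opt}}$ as the maximizer of $h$ rather than positing it, and you explicitly treat the degenerate case $\beta=0$, which the paper leaves implicit.
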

\begin{proof}
  For $\lambda\in M$ we define $r_\lambda:= (\lambda^{p'-1}\alpha)^{1/(p'-2)}$. Then we have 

\begin{align*}
  \inf_{\|\phi\|_{p'}= r_\lambda} \tilde J_\lambda (\phi)
  &= \inf_{\|\phi\|_{p'}=  r_\lambda}  \sup_{\psi \in L^{p'}(A_{-})} J_\lambda(\phi,\psi) 
  \geq \inf_{\|\phi\|_{p'}= r_\lambda}  J_\lambda (\phi, 0)  \\
  &= \inf_{\|\phi\|_{p'}=  r_\lambda}  \frac{\lambda^{1-p'}}{p'} \|\phi\|_{p'}^{p'} - \frac{1}{2} \int_{\RN}
  \phi \mathcal{K} \phi \dx\\
  &=\frac{\lambda^{1-p'} }{p'} r_\lambda^{p'} - \frac{ \alpha}{2}  r_\lambda^2 
  = \alpha\left(\frac{1}{p'}-\frac{1}{2}\right) (\lambda^{p'-1}\alpha)^{\frac{2}{p'-2}} \\ 
   &>0.
\end{align*}
On the other hand,  we have
\begin{align*}
	\tilde J_\lambda (0) 
	= \sup_{\psi \in L^{p'}(A_{-})} J_\lambda(0,\psi) 
	= \sup_{\psi \in L^{p'}(A_{-})} \left[ -\frac{1}{p'} \|\psi\|_{p'}^{p'} - \frac{1}{2} \int_{\RN} \psi \mathcal{K}\psi \dx\right]
	= 0.
 \end{align*}
 According to Definition~\ref{def:MPGeometry} it therefore remains to find some
 $\phi^* \in L^{p'}(A_+)$ with $\|\phi^*\|_{p'}\geq r_{\lambda_0}$ such that
 $\tilde J_{\lambda_0}(\phi^*)\leq 0$ holds. Notice that in this case we actually have $\tilde
 J_{\lambda}(\phi^*)<\tilde J_{\lambda_0}(\phi^*)\leq 0$ for all $\lambda\in M\subset(\lambda_{0},\infty)$.
To achieve this we estimate $\tilde J_{\lambda_0}$ from above as follows 
  \begin{align*}
    \tilde J_{\lambda_0}(\phi)
    &= J_{\lambda_0}(\phi,0) - \frac{1}{p'}\|Z(\phi)\|_{p'}^{p'} + \int_{\RN} \phi\cK (Z(\phi)) -
    \frac{1}{2}\int_{\RN} Z(\phi)\mathcal K(Z(\phi)) \\
    &\leq J_{\lambda_0}(\phi,0) - \frac{1}{p'}\|Z(\phi)\|_{p'}^{p'} + \beta \|\phi\|_{p'}\|Z(\phi)\|_{p'} \\   
    &\leq J_{\lambda_0}(\phi,0) + \frac{\beta^p}{p}\|\phi\|_{p'}^{p}   
  \end{align*}
  where we have used that $ \max\limits_{c\geq 0}\left( -\frac{c^{p'}}{p'} + \beta\left\|\phi\right\|^{p'}_{p'}c\right) = \frac{\beta^p}{p}\|\phi\|_{p'}^{p} .$  We choose $\phi^*=  r_0 \phi_0$ where the function $\phi_0 \in
  L^{p'}(A_+), \|\phi_0\|_{p'} = 1$ attains the maximum $\alpha = \int_{\RN} \phi_0 \mathcal{K} \phi_0 \dx >
  0$. Then the choice $R:= (\frac{1}{2}\alpha\beta^{-p})^{1/(p-2)}$ yields after some computations 
  (recall $\lambda_0= (2\beta\alpha^{-1})^p$)
  $$
    \|\phi^*\|_{p'} 
    = R
    = \left(\frac{1}{2}\alpha\beta^{-p}\right)^{\frac{1}{p-2}} 
    \geq (\lambda_0^{p'-1}\alpha)^{\frac{1}{p'-2}}
    > (\lambda^{p'-1}\alpha)^{\frac{1}{p'-2}}
    = r_\lambda
    \qquad\text{for all }\lambda\in M.
  $$
  Using again the explicit formulas for $R,\lambda_0$ we find   
\begin{align*}
 \tilde{J}_{\lambda_0}(R\phi_0)
  & \leq J_{\lambda_0}(R\phi_0)  + \frac{\beta^p R^p}{p}  \\
  &=  \frac{\lambda_0^{1-p'}}{p'} R^{p'}- \frac{\alpha}{2} R^2  + \frac{\beta^p }{p}R^p  \\
  &=  \frac{R^{p'}}{p'} \cdot \left( \lambda_0^{1-p'}   - \frac{p\alpha}{2(p-1)} R^{2-p'}  +
  \frac{\beta^p }{p-1} R^{p-p'}\right) \\
  &=  \frac{R^{p'}}{p'}\cdot \left(  \lambda_0^{1-p'}   - \frac{1}{2} \alpha R^{2-p'}   \right) \\
  &=  \frac{R^{p'}}{p'}\cdot \left(  (2\beta\alpha^{-1})^{-\frac{p}{p-1}}   - \frac{1}{2} \alpha\cdot
  \left(\frac{1}{2}\alpha\beta^{-p}\right)^{\frac{1}{p-1}}  \right) \\
  &=  0 
%
\end{align*}
and thus the claim holds with $v_{1} = 0$ and $v_{2} = \phi^*=R\phi_0$.
\end{proof}

Having established the Mountain Pass Geometry of our functionals we now verify the
condition~\eqref{eqn:conditionH} in order to use Theorem~\ref{thm:JeanjeanToland}

\begin{proposition}\label{prop:ConditionH}
  For any compact subinterval $M\subset (\lambda_0,\infty)$ the family of $\cC^{1}-$functionals
  $(\tilde{J}_{\lambda})_{\lambda\in M}$ satisfies the condition~\eqref{eqn:conditionH}.
\end{proposition}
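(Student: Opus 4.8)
The plan is to exploit the observation made right after the proof of Proposition~\ref{prop:sp_reduction}, namely that the maximizer $Z(\phi)$ does not depend on $\lambda$. Consequently the entire $\lambda$-dependence of the reduced functional is isolated in a single term: spelling out $\tilde J_\lambda(\phi)=J_\lambda(\phi,Z(\phi))$ via~\eqref{eqn:functional} gives the exact decomposition
\[
  \tilde J_\lambda(\phi)=\frac{\lambda^{1-p'}}{p'}\|\phi\|_{p'}^{p'}+G(\phi),\qquad
  G(\phi):=-\frac{1}{p'}\|Z(\phi)\|_{p'}^{p'}-\frac12\int_{\RN}(\phi-Z(\phi))\cK(\phi-Z(\phi))\,dx,
\]
where $G$ is independent of $\lambda$. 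This reduces the verification of~\eqref{eqn:conditionH} for $I_\lambda=\tilde J_\lambda$ to elementary facts about the scalar map $\lambda\mapsto\lambda^{1-p'}$, which is positive, strictly decreasing and convex on $(0,\infty)$ since $1-p'<0$.

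Next I would derive the boundedness of $(\phi_n)$ from the third hypothesis in~\eqref{eqn:conditionH} alone. Because $G$ cancels in the difference $\tilde J_{\lambda_n}(\phi_n)-\tilde J_{\lambda_*}(\phi_n)$, the relevant difference quotient is exactly
\[
  \frac{\tilde J_{\lambda_n}(\phi_n)-\tilde J_{\lambda_*}(\phi_n)}{\lambda_*-\lambda_n}
  =\frac{\lambda_n^{1-p'}-\lambda_*^{1-p'}}{p'(\lambda_*-\lambda_n)}\,\|\phi_n\|_{p'}^{p'}.
\]
Since $\lambda\mapsto\lambda^{1-p'}$ is decreasing and $\lambda_n<\lambda_*$, the coefficient multiplying $\|\phi_n\|_{p'}^{p'}$ is strictly positive; furthermore, as a function of $\lambda_n\in M$ it extends continuously to $\lambda_n=\lambda_*$ with limiting value $(p'-1)\lambda_*^{-p'}/p'>0$, so by continuity on the compact set $M$ it is bounded below by some constant $c>0$. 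Hence boundedness from above of the third quantity forces $c\,\|\phi_n\|_{p'}^{p'}\le C$, i.e.\ $(\phi_n)$ is bounded in $L^{p'}(A_+)$; notably the first two hypotheses are not even needed here.

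Finally I would check the $\limsup$ condition, again using the cancellation of $G$:
\[
  I_{\lambda_*}(\phi_n)-I_{\lambda_n}(\phi_n)
  =\frac{\lambda_*^{1-p'}-\lambda_n^{1-p'}}{p'}\,\|\phi_n\|_{p'}^{p'}\le 0
\]
for every $n$, since $\lambda_n<\lambda_*$ gives $\lambda_*^{1-p'}<\lambda_n^{1-p'}$. Thus the $\limsup$ is $\le 0$; in fact, having established that $(\phi_n)$ is bounded and $\lambda_n\to\lambda_*$, this quantity even tends to $0$.

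I do not expect a genuine obstacle, as the whole statement rests on the $\lambda$-independence of $Z(\phi)$, which renders the $\lambda$-dependence of $\tilde J_\lambda$ affine in the single quantity $\|\phi\|_{p'}^{p'}$. The only step demanding some care is the uniform positive lower bound $c>0$ on the difference-quotient coefficient; this is precisely where the compactness of $M\subset(\lambda_0,\infty)$, which keeps $\lambda_*$ bounded away from $0$, together with the convexity of $\lambda\mapsto\lambda^{1-p'}$, enters the argument.
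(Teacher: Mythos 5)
Your proof is correct and follows essentially the same route as the paper: both isolate the $\lambda$-dependence of $\tilde J_\lambda$ in the single term $\frac{\lambda^{1-p'}}{p'}\|\phi\|_{p'}^{p'}$ (using that $Z(\phi)$ is $\lambda$-independent), deduce boundedness of $(\phi_n)$ from the difference-quotient hypothesis alone, and then verify the $\limsup$ condition. The only cosmetic differences are that you obtain a uniform lower bound on the difference-quotient coefficient via compactness of $M$ where the paper writes it as $(p'-1)\lambda_*^{-p'}+o(1)$, and you observe that $I_{\lambda_*}(\phi_n)-I_{\lambda_n}(\phi_n)\leq 0$ holds pointwise, which makes the final step even slightly more economical.
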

\begin{proof}
  Consider a sequence $(\phi_n, \lambda_n) \in L^{p'}(A_+) \times M$ with $\lambda_n \nearrow
  \lambda_*$ and
  \begin{align*}
   -\tilde{J}_{\lambda_*}(\phi_n) \leq C, 
   \quad
   \tilde{J}_{\lambda_n}(\phi_n) \leq C,
   \quad
   \frac{\tilde{J}_{\lambda_n}(\phi_n)-\tilde{J}_{\lambda_*}(\phi_n)}{\nu_*-\nu_n} \leq C
 \end{align*}    
 for all $n \in \N$. Then we have 
  $$ 
    C
    \geq \frac{\tilde{J}_{\lambda_n}(\phi_n)-\tilde{J}_{\lambda^*}(\phi_n)}{\lambda_*-\lambda_n}
    = \frac{\lambda_n^{1-p'}  - \lambda_*^{1-p'}}{\lambda_*-\lambda_n} \|\phi_n\|_{p'}^{p'}
    = \left( (p'-1)\lambda_*^{-p'}+o(1)\right) \|\phi_n\|_{p'}^{p'}
    \qquad (n\to\infty)
  $$
  So we conclude that  $(\phi_n)$ is bounded. Furthermore, $\lambda_n\to\lambda^*>0$ gives
  $$
    \limsup_{n\to\infty}\, (I_{\lambda_*}(\phi_n)-I_{\lambda_n}(\phi_n))
    = \limsup_{n\to\infty}\, (\lambda_n^{1-p'}-\lambda_*^{1-p'}) \|\phi_n\|^{p'}_{p'}
    = 0,
  $$ 
  which is all we had to show.
\end{proof}

 We thus conclude that Theorem~\ref{thm:JeanjeanToland} applies in our context and yields BPS sequences for
 $\tilde J_\lambda$ at the corresponding Mountain pass levels $c_\lambda$ for almost all $\lambda \in
 (\lambda_0,\infty)$. From the existence of BPS sequences we deduce rather easily the existence of critical
 points at the corresponding Mountain Pass level.
  
\begin{proposition} \label{prop:PScond}
  Let  $\cK : L^{p'}(\RN) \to L^{p}(\RN)$ be compact and assume $\int\limits_{\RN}\psi \cK \psi ~dx \geq 0$ for
  all $\psi \in L^{p'}(A_{-})$. Then for all $\lambda\in
  (\lambda_0,\infty)$ every BPS sequence of   $\tilde J_\lambda$ at its Mountain Pass level $c_\lambda$
  converges to a critical point of $\tilde J_\lambda$ at the level $c_\lambda$.
\end{proposition}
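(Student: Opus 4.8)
The plan is to verify that $\tilde J_\lambda$ satisfies a Palais--Smale type compactness condition along the given sequence, so that a subsequence converges strongly to the desired critical point. Let $(\phi_n)\subset L^{p'}(A_+)$ be a BPS sequence, i.e. $(\phi_n)$ is bounded, $\tilde J_\lambda(\phi_n)\to c_\lambda$ and $\tilde J_\lambda'(\phi_n)\to 0$ in the dual space $L^p(A_+)$. By Proposition~\ref{prop:sp_reduction}~(iii) the last condition says precisely that $\lambda^{1-p'}|\phi_n|^{p'-2}\phi_n - \1_{A_+}\cK(\phi_n-Z(\phi_n))\to 0$ in $L^p(A_+)$. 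Since $(\phi_n)$ is bounded I first pass to a subsequence with $\phi_n\weakto\phi$ in $L^{p'}(A_+)$; by the a priori bound~\eqref{eqn:maximizerbounded_prop} of Proposition~\ref{prop:sp_reduction}~(i) the maximizers $Z(\phi_n)$ are bounded as well, so after a further subsequence $Z(\phi_n)\weakto\chi$ in $L^{p'}(A_-)$ for some $\chi\in L^{p'}(A_-)$.

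The crucial step is to upgrade this weak convergence to strong convergence of $(\phi_n)$, and here the compactness of $\cK$ enters. Indeed $\phi_n-Z(\phi_n)\weakto \phi-\chi$ in $L^{p'}(\RN)$, so compactness of $\cK$ yields $\cK(\phi_n-Z(\phi_n))\to\cK(\phi-\chi)$ strongly in $L^p(\RN)$. Combining this with the Palais--Smale relation above shows that $\lambda^{1-p'}|\phi_n|^{p'-2}\phi_n$ converges strongly in $L^p(A_+)$, with limit $\1_{A_+}\cK(\phi-\chi)$.

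I then invert the duality map. The Nemytskii operator $\phi\mapsto|\phi|^{p'-2}\phi$ is a homeomorphism from $L^{p'}(A_+)$ onto $L^p(A_+)$ with continuous inverse $v\mapsto|v|^{p-2}v$, since $(p'-1)(p-1)=1$ and $(p'-1)p=p'$. Hence the strong convergence of $|\phi_n|^{p'-2}\phi_n$ in $L^p(A_+)$ forces $(\phi_n)$ to converge strongly in $L^{p'}(A_+)$, and its strong limit necessarily coincides with the weak limit $\phi$. Thus $\phi_n\to\phi$ in $L^{p'}(A_+)$.

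Finally I pass to the limit in the three defining properties. By the continuity of $Z$ (Proposition~\ref{prop:sp_reduction}~(ii)) we get $Z(\phi_n)\to Z(\phi)$ strongly, so in particular $\chi=Z(\phi)$; by the continuity of $\tilde J_\lambda$ and of its derivative (Proposition~\ref{prop:sp_reduction}~(iii)) we obtain $\tilde J_\lambda(\phi)=\lim_{n}\tilde J_\lambda(\phi_n)=c_\lambda$ and $\tilde J_\lambda'(\phi)=\lim_{n}\tilde J_\lambda'(\phi_n)=0$. Hence $\phi$ is a critical point of $\tilde J_\lambda$ at the level $c_\lambda$ (and nontrivial, since $c_\lambda>0=\tilde J_\lambda(0)$), which proves the claim up to the passage to a subsequence. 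The only genuine difficulty is the strong-convergence step: because $Z$ is merely (strongly) continuous and is not known to be weakly continuous, one cannot identify $\chi$ with $Z(\phi)$ before strong convergence of $(\phi_n)$ has been established; the compactness of $\cK$ resolves this apparent circularity, as it produces strong convergence of $\cK(\phi_n-Z(\phi_n))$ from weak convergence of its argument alone.
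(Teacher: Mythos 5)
Your proof is correct and rests on the same two pillars as the paper's: the compactness of $\cK$ upgrades the weak convergence of $\phi_n - Z(\phi_n)$ to strong $L^p$ convergence of its image, and the Palais--Smale relation then forces $|\phi_n|^{p'-2}\phi_n$ to converge strongly in $L^p(A_+)$, which yields strong convergence of $(\phi_n)$ in $L^{p'}(A_+)$. The only (harmless) differences are cosmetic: the paper establishes that $(|\phi_j|^{p'-2}\phi_j)_j$ is Cauchy by comparing two indices and then uses weak convergence plus norm convergence, whereas you identify the limit directly and invert the duality map via continuity of the Nemytskii operator $v\mapsto|v|^{p-2}v$.
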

\begin{proof}
  Let $(\phi_j)_j$ in $L^{p'}(A_+)$ be a BPS sequence for $\tilde J_\lambda$, i.e.,  
  $\tilde{J}_\lambda(\phi_j) \to c > 0$ and $\tilde{J}_\lambda'(\phi_j) \to 0$.  
  We may thus assume w.l.o.g. $\phi_j \rightharpoonup \phi^*$. Moreover,
  Proposition~\ref{prop:sp_reduction}~(i) implies the
  boundedness of $(\psi_j)_j := (Z(\phi_j))_j$ and hence w.l.o.g. also weak convergence. For all $h \in L^{p'}(A_+)$ we then
  have, in view of the formula for $\tilde J_\lambda'$ from Proposition~\ref{prop:sp_reduction}~(iii),
\begin{align*}
	&
	\left|
	\int_{\RN} |\phi_j|^{p'-2}\phi_j h - |\phi_k|^{p'-2} \phi_k h \dx
	\right|
	\\
	& \quad 
	=  
	\left|
	\tilde{J}_\lambda'(\phi_j) h - \tilde{J}'(\phi_k) h
	+ \int_{\RN} h \mathcal{K}[\phi_j - \phi_k] \dx
	- \int_{\RN} h \mathcal{K}[\psi_j - \psi_k] \dx
	\right|
	\\
	& \quad
	\leq  
	\|h\|_{p'} \cdot \left[ \|\tilde{J}_\lambda'(\phi_j)\| + \|\tilde{J}_\lambda'(\phi_k)\| + 
	\|\mathcal{K}[\phi_j - \phi_k]\|_{p}
	+ \|\mathcal{K}[\psi_j - \psi_k]\|_{p}  \right]\\
	& \quad 
	= \|h\|_{p'} \cdot o(1) \qquad (j,k\to\infty).
\end{align*}
We infer that $(|\phi_j|^{p'-2}\phi_j)_j$ converges strongly in $L^p(A_+)$. By
uniqueness of weak limits, we infer $|\phi_j|^{p'-2}\phi_j \to |\phi^*|^{p'-2}\phi^*$
strongly in $L^p(A_+)$ and hence in particular $\|\phi_j\|_{p'} \to \|\phi^*\|_{p'}$. This finally
implies $\phi_j \to \phi^*$ strongly in $L^{p'}(A_+)$. A standard computation finally shows $\tilde
J_\lambda(\phi^*)=c_\lambda$ as well as $\tilde J_\lambda'(\phi^*)=0$.
\end{proof}

\section{Proof of Theorem~\ref{thm:introthm} and Corollary~\ref{cor}}

We finally combine all auxiliary results to prove Theorem~\ref{thm:introthm}.

\begin{proof}[Proof of Theorem~\ref{thm:introthm}]
From Proposition~\ref{prop:crit_points_are solutions} and Proposition~\ref{prop:sp_reduction} we infer that 
for almost all $\lambda\in (\lambda_0,\infty)$  a nontrivial solution $u\in L^p(\RN)$ of the nonlinear
Helmholtz equation~\eqref{eqn:introeq} is found once we have proved the existence of nontrivial critical points of the
reduced functional $\tilde J_\lambda$ for almost all $\lambda\in M$ where $M$ is an arbitrary compact
subinterval of $(\lambda_0,\infty)$.
From Proposition~\ref{prop:MP}  we infer that the family  $(\tilde J_\lambda)_{\lambda\in M}$ has the Mountain
Pass Geometry. Moreover, by Proposition~\ref{prop:ConditionH}, condition~\eqref{eqn:conditionH} holds.
So Theorem~\ref{thm:JeanjeanToland} yields for almost all $\lambda\in M$ a BPS sequence for 
$\tilde J_\lambda$ at the corresponding Mountain Pass level. By Proposition~\ref{prop:PScond} each of these
BPS sequences converges to a critical point $\phi_\lambda$ of $\tilde J_\lambda$ at the Mountain Pass level. 
Since this critical point is necessarily nontrivial, we have thus obtained the desired claim for
$L^p(\RN)$-solutions of~\eqref{eqn:introeq}. From \cite[Lemma 4.3]{Evequoz2015} we infer that
each of these solutions belongs to $W^{2,q}(\RN)\cap \cC^{1,\alpha}(\RN)$ for all $p \leq q < \infty$ and
$\alpha \in (0,1)$. Arguing as in Step~3 and Step~4 \cite[p.13]{Man_Uncountably} one even obtains that these
solutions belong to $W^{2,q}(\RN)$ for all $q\in (\frac{2N}{N-1},p)$. 
In particular, these solutions are strong solutions of \eqref{eqn:introeq}, which
finishes the proof.  
\end{proof}

\begin{proof}[Proof of Collorary~\ref{cor}]
In order to apply Theorem~\ref{thm:introthm} we show that~\eqref{eq:SufficientConditions}
implies~\eqref{eq:abstractassumptions}. In the special case $k=1$ the compactness of $\cK$ was shown in
Lemma~4.2 in~\cite{Evequoz2015}. So the general case follows by rescaling. It therefore remains to show that
$\delta:=\diam{(A_{-})} \leq k^{-1}y_{\frac{N-2}{2}}$ implies $\int\limits_{\RN}\psi \cK \psi~dx \geq 0$ for all $\psi \in L^{p'}(A_{-})$. 
Due to \eqref{eqn:resolvent}, $\mathcal K=|Q|^{1/p}\mathcal \bR(|Q|^{1/p}\cdot)$ and $Q \in
L^{\infty}(\RN)$ it suffices to prove
\begin{equation}\label{eqn:resolventpositive}
  \int\limits_{\RN} \psi \bR \psi~dx \geq 0, \qquad \text{ for all } \psi \in \SR(A_{-}).
\end{equation}
Using  that $x,y\in A_-$ implies $x-y\in B_\delta$ we infer from Corollary~5.4 in~\cite{chen2019complex} 
$$ 
\int\limits_{\RN} \psi \bR  \psi~dx 
= \int\limits_{\RN} \psi [\1_{B_{\delta}}\Psi  \ast \psi](x)~dx 
\geq 0
$$
which proves \eqref{eqn:resolventpositive} and hence the Corollary.  

\end{proof}

\bibliographystyle{abbrv}	
\bibliography{biblio}

\end{document}